\newtheorem{theorem}{Theorem}[section]
\newtheorem{corollary}[theorem]{Corollary}
\newtheorem{lemma}[theorem]{Lemma}
\newtheorem{definition}[theorem]{Definition}
\newtheorem{prop}[theorem]{Proposition}
\theoremstyle{remark}
\newtheorem{remark}[theorem]{Remark}
\newcommand{\eps}{\varepsilon}
\newcommand{\norm}[1]{\lVert#1\rVert} 
\DeclareMathOperator*{\esssup}{ess.\,sup} 
\let\div\undefined\DeclareMathOperator{\div}{div} 
\author[Colombo]{Maria Colombo}
\address{EPFL SB, Station 8, 
CH-1015 Lausanne, Switzerland
}
\email{maria.colombo@epfl.ch}
\author[Haffter]{Silja Haffter
}
\address{EPFL SB, Station 8, 
CH-1015 Lausanne, Switzerland
}
\email{silja.haffter@epfl.ch}
\begin{document}
\title[Global regularity for Navier-Stokes below the critical order]{Global regularity for the hyperdissipative Navier-Stokes equation below the critical order}
\maketitle

\begin{abstract}
We consider solutions of the Navier-Stokes equation with fractional dissipation of order $\alpha\geq 1$. We show that for any divergence-free initial datum $u_0$ such that $\norm{u_0}_{H^{\delta}} \leq M$, where $M$ is arbitrarily large and $\delta$ is arbitrarily small, there exists an explicit $\epsilon=\epsilon(M, \delta)>0$ such that the Navier-Stokes equations with fractional order $\alpha$ has a unique smooth solution for $\alpha \in (\frac{5}{4}-\epsilon, \frac{5}{4}]$. 
This is related to a new stability result on smooth solutions of the Navier-Stokes equations with fractional dissipation showing that the set of initial data and fractional orders giving rise to smooth solutions is open in $H^{5/4} \times (\frac 34, \frac{5}{4}]$. 
\end{abstract}

\textbf{Keywords:} Navier-Stokes equations, fractional dissipation, global regularity, supercritical equation.

\textbf{2010 Mathematics Subject Classification:} 76D05, 35Q35, 35B65.

\section{Introduction}

For $\alpha>0$ we consider the Cauchy problem for the Navier-Stokes equations with fractional dissipation (of order $\alpha$) in three space dimensions 
\begin{equation}\label{eq: NSalpha}
\begin{cases} \partial_t u + (u\cdot \nabla) u + \nabla p = - (-\Delta)^\alpha u  \\
\div u =0
\, .
\end{cases}
\end{equation}
Here $u: \mathbb{R}^3 \times [0,+\infty) \to \mathbb{R}^3$ is the velocity of an incompressible fluid, $p: \mathbb{R}^3 \times [0,+\infty) \to \mathbb{R}$ is the associated hydrodynamic pressure, $u( \cdot, 0)= u_0$ is a given, divergence-free initial datum, and the operator $(- \Delta)^\alpha$ corresponds to the Fourier symbol $|\xi|^{2\alpha}$.
The natural a priori bound associated with system \eqref{eq: NSalpha} is given by the total energy 
$$\mathcal{E}(u; t) := \frac{1}{2} \int \lvert u \rvert^2 (x,t) \, \mathrm{d}x + \int_0^t \int \lvert (-\Delta)^{\alpha/2} u \rvert^2 (x,\tau) \, \mathrm{d}x \, \mathrm{d}\tau \,.$$
 Moreover, the system  \eqref{eq: NSalpha} has a natural scaling which preserves the equation: namely, given any solution $(u,p)$, also $(u_r, p_r)=(r^{2\alpha-1}u(rx,r^{2\alpha}t), r^{4\alpha-2}p(rx,r^{2\alpha}t))$ is a solution to the same equation. 
Correspondingly, the total energy of the rescaled solution scales like $\mathcal{E}(u_r; t)= \frac{1}{r^{5-4\alpha}} \mathcal{E}(u;t)$. Consequently, the equation is called critical if the energy is scaling-invariant, namely for $\alpha= \frac 54$, subcritical for $\alpha>\frac 54$ and supercritical for $\alpha<\frac 54$.

Our main result shows that, given any (possibly large) initial datum $u_0$, the supercritical Navier-Stokes equation is globally well-posed at least for an open interval of orders below $\frac 54$. 
\begin{theorem}[Global regularity below the critical order]\label{thm:main1} Let $\delta \in (0,1]$ arbitrary. Then for any divergence-free $u_0 \in H^{\delta}$ with $\norm{u_0}_{H^{\delta}}\leq M$ there exists $\epsilon= \epsilon(M, \delta)>0$ such that the Navier-Stokes equations of fractional order $\alpha \in (\frac{5}{4}-\epsilon, \frac{5}{4}]$ has a unique global smooth solution starting from $u_0$. Moreover, the dependence of $\epsilon$ on $M$ and $\delta$ is explicit through \eqref{eq: epsilonexplicit 54 ugly}.
\end{theorem}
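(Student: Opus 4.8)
I outline the strategy I would follow, which avoids any comparison or compactness argument and instead rests on a single explicit a priori estimate in the sub-critical space $H^\delta$ together with the local theory. Fix a divergence-free $u_0$ with $\norm{u_0}_{H^\delta}\le M$, write $\alpha=\tfrac54-\beta$ with $\beta\in[0,\epsilon)$, and let $s_c:=\tfrac52-2\alpha=2\beta$ be the critical Sobolev exponent of \eqref{eq: NSalpha}. As soon as $\epsilon\le\delta/2$ one has $\delta>s_c$, so \eqref{eq: NSalpha} is locally well posed in $H^\delta$ with a life-span bounded below by $c\,\norm{u_0}_{H^\delta}^{-\kappa}$ for some $c,\kappa>0$ that are uniform for $\alpha$ in the relevant interval; the solution is smooth for positive times, and by the usual continuation criterion the maximal smooth solution $v$ on $[0,T^*)$ has $T^*=\infty$ provided $\norm{v(t)}_{H^\delta}$ stays bounded. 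It therefore suffices to produce, under an explicit condition on $\beta$, a bound $\sup_{[0,T^*)}\norm{v(t)}_{H^\delta}\le R(M,\delta)$.

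First I would record the two ``free'' estimates for the smooth solution: the energy identity gives $\sup_{[0,T^*)}\norm{v(t)}_{L^2}\le M$ and, crucially, $\int_0^{T^*}\norm{v}_{\dot H^\alpha}^2\,\mathrm{d}t\le\tfrac12\norm{u_0}_{L^2}^2\le\tfrac12 M^2$. The core of the argument is then a differential inequality for $y(t):=\norm{v(t)}_{\dot H^\delta}^2$. Testing \eqref{eq: NSalpha} with $(-\Delta)^\delta v$ yields $\tfrac12 y'+\norm{v}_{\dot H^{\delta+\alpha}}^2=-\langle (v\cdot\nabla)v,v\rangle_{\dot H^\delta}$, and I would bound the trilinear term by a scaling-consistent product estimate valid in three dimensions for $\delta\in(0,1]$ and $\beta<\tfrac14$,
\begin{equation}\label{eq: trilinearHd}
\bigl|\langle (v\cdot\nabla)v,v\rangle_{\dot H^\delta}\bigr|\lesssim\norm{v}_{\dot H^\delta}\,\norm{v}_{\dot H^{5/4+\beta}}\,\norm{v}_{\dot H^{\delta+\alpha}},
\end{equation}
then interpolate the middle factor between the energy-dissipation level $\dot H^{\alpha}=\dot H^{5/4-\beta}$ and the dissipation level $\dot H^{\delta+\alpha}$ appearing in \eqref{eq: trilinearHd}, namely $\norm{v}_{\dot H^{5/4+\beta}}\lesssim\norm{v}_{\dot H^\alpha}^{1-2\beta/\delta}\norm{v}_{\dot H^{\delta+\alpha}}^{2\beta/\delta}$ (which needs exactly $\beta\le\delta/2$), and finally absorb the resulting power $\norm{v}_{\dot H^{\delta+\alpha}}^{1+2\beta/\delta}$ — subquadratic since $1+2\beta/\delta<2$ — into the dissipation by Young's inequality. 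This produces, with a universal constant $C_1$,
\begin{equation}\label{eq: keyODE}
y'(t)\le C_1\,\norm{v(t)}_{\dot H^\alpha}^2\,y(t)^{p},\qquad p:=\frac{1}{1-2\beta/\delta}>1 .
\end{equation}

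The last step is the elementary observation that \eqref{eq: keyODE} is a \emph{sub-critical} ODE whenever the data are small relative to the super-linearity deficit $p-1\sim 2\beta/\delta$: setting $h=y^{1-p}$ one gets $h'\ge-(p-1)C_1\norm{v}_{\dot H^\alpha}^2$, hence $y(t)\le\bigl(y(0)^{1-p}-(p-1)C_1\!\int_0^{t}\norm{v}_{\dot H^\alpha}^2\bigr)^{-1/(p-1)}$. Since $y(0)=\norm{u_0}_{\dot H^\delta}^2\le M^2$ and $\int_0^{T^*}\norm{v}_{\dot H^\alpha}^2\le\tfrac12M^2$, the right-hand side stays finite on $[0,T^*)$ as soon as $M^{2(1-p)}>(p-1)\,C_1M^2/2$, i.e.\ $(p-1)<\tfrac{2}{C_1}M^{-2p}$, which with $p=1/(1-2\beta/\delta)$ is a (self-referential) inequality on $\beta$ satisfied whenever $\beta$ is below an explicit function of $M$ and $\delta$, roughly $\epsilon\sim\delta\,M^{-4}$; this self-referential solve is exactly the source of the ``ugly'' formula \eqref{eq: epsilonexplicit 54 ugly}. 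Combining the resulting bound on $\norm{v}_{\dot H^\delta}$ with $\norm{v}_{L^2}\le M$ gives the uniform $H^\delta$ bound, hence $T^*=\infty$; uniqueness and smoothness for $t>0$ come from the local theory, and at $\beta=0$ the inequality \eqref{eq: keyODE} is linear, so the argument degenerates continuously to the classical global regularity at $\alpha=\tfrac54$.

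The point I expect to be most delicate — and what makes the theorem conditional rather than unconditional — is precisely that no estimate of the type \eqref{eq: keyODE} can close unconditionally for a fixed $\alpha<\tfrac54$: one is forced to feed the super-linear term with a time-integrable quantity, and the only one available is the energy dissipation $\int\norm{v}_{\dot H^\alpha}^2$, since the critical-norm dissipation $\int\norm{v}_{\dot H^{s_c}}^2$ fails to be finite (the low frequencies are not integrated). Arranging for \eqref{eq: trilinearHd} to carry a genuine $\norm{v}_{\dot H^\alpha}$-factor after interpolation, with exponent exactly $1-2\beta/\delta$, is possible only because $\alpha$ is near $\tfrac54$ and $\delta>s_c$, and the whole estimate degrades both as $\delta\downarrow0$ and as $\alpha\downarrow\tfrac54-\tfrac\delta2$; keeping every constant (the universal $C_1$, the uniformity of the local theory in $\alpha$) explicit, so that $\epsilon$ ends up depending on $(M,\delta)$ alone, is the bookkeeping that has to be carried out with care.
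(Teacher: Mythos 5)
Your argument is correct in its essentials, but it follows a genuinely different route from the paper, so let me compare the two. The paper never tries to close an a priori estimate directly at the order $\alpha=\frac54-\beta$: it proves the $H^\delta$ bound \eqref{eq: aprioriest at 54} only at the critical order $\alpha=\frac54$, where the Gr\"onwall factor $\norm{u}_{\dot H^{5/2-\alpha}}$ coincides with the energy dissipation $\norm{(-\Delta)^{\alpha/2}u}_{L^2}$, and then transfers global regularity to nearby orders by a two-step perturbation argument: a quantitative stability estimate for the difference equation on a finite time interval (Proposition~\ref{prop: locstab H3}, resting on Lemma~\ref{lem: fraclapRn} for $(-\Delta)^\alpha-(-\Delta)^\beta$), combined with Leray's eventual-regularization estimate (Corollary~\ref{cor: LeraySingTime}) to cover large times. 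Your route instead works directly on the solution at order $\frac54-\beta$. Its first ingredient, the trilinear bound $\lvert\langle (v\cdot\nabla)v,v\rangle_{\dot H^\delta}\rvert\lesssim\norm{v}_{\dot H^\delta}\norm{v}_{\dot H^{5/2-\alpha}}\norm{v}_{\dot H^{\delta+\alpha}}$, is in fact already established in the paper (it is the displayed commutator estimate in the proof of Theorem~\ref{thm:main1}, valid for all $\alpha$, with constant $(1+\bar D)\bar C^2$); what is new is the observation that for $2\beta\le\delta$ one can interpolate $\norm{v}_{\dot H^{5/2-\alpha}}\le\norm{v}_{\dot H^{\alpha}}^{1-2\beta/\delta}\norm{v}_{\dot H^{\delta+\alpha}}^{2\beta/\delta}$, absorb the resulting subquadratic power of the dissipation by Young (which, crucially, restores exactly the exponent $2$ on $\norm{v}_{\dot H^\alpha}$, the one quantity with a finite time integral), and close the superlinear ODE $y'\le C_1\norm{v}_{\dot H^\alpha}^2y^{p}$ by the standard $h=y^{1-p}$ substitution. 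I checked the exponents: the H\"older triple $(2,\,3/\alpha,\,6/(3-2\alpha))$ closes, the interpolation parameter is $2\beta/\delta$, and the non-blow-up condition $y(0)^{1-p}>(p-1)C_1M^2/2$ is indeed an explicit constraint of the form $\beta\lesssim\delta/(C_1M^{2p})$. This is the Coti Zelati--Vicol ``almost conservation'' mechanism, which the introduction suggests does not apply here; your point that the time-integrability of the dissipation (rather than criticality of a conserved quantity) is what feeds the superlinear term shows that it does.

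What each approach buys: yours is self-contained, dispenses entirely with Sections~4--6 of the paper (eventual regularization, suitable weak solutions, the stability proposition), and would yield an $\epsilon$ that is polynomial in $M^{-1}$ rather than the doubly exponential expression \eqref{eq: epsilonexplicit 54 ugly}; the paper's heavier machinery, on the other hand, is what delivers the stability statement of Theorem~\ref{thm:main2} for all $\alpha>\frac34$, which your argument does not address. Two points in your sketch do need to be carried out rather than asserted: the uniform-in-$\alpha$ subcritical local theory in $H^\delta$ (existence, the blow-up criterion in $\norm{v(t)}_{H^\delta}$, and the validity of the energy equality for the mild solution, needed to bound $\int_0^{T^*}\norm{v}_{\dot H^\alpha}^2\,\mathrm{d}t$ by $\frac12M^2$), and the justification of the $\dot H^\delta$ energy identity at time $0$ for data that are only $H^\delta$ (standard via instantaneous smoothing and continuity of $t\mapsto\norm{v(t)}_{\dot H^\delta}$ at $t=0$). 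Also note that your reduction $y(0)^{1-p}\ge M^{2(1-p)}$ tacitly assumes $M\ge1$; for $M<1$ the condition only improves, but it should be said. None of these is a gap in the idea; they are the bookkeeping you already flag.
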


This result is related to a more general stability result on smooth solutions of the hyperdissipative Navier-Stokes equations, with respect to variations of both the initial datum and the fractional order. The following theorem quantifies the convergence of the initial data in a stronger norm since at difference from Theorem~\ref{thm:main1} it covers also ipodissipative orders.

\begin{theorem}\label{thm:main2} Let $p \in [1,2)$. The set of initial data $u_0$ and fractional orders $\alpha$ giving rise to global smooth solutions in $C([0,+\infty), H^1)$ of the fractional Navier-Stokes equations is an open set in 
$$\left\{ u_0 \in L^p\cap H^{\frac 54}(\mathbb{R}^3; \mathbb{R}^3) : \div u_0=0\right\} \times \Big(\frac{3}{4}, \frac{5}{4}\Big]\, ,$$ endowed with the product topology.
\end{theorem}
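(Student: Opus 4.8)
The plan is to glue together three ingredients: a subcritical local well-posedness and stability theory in $H^{5/4}$, a small-data global regularity theorem in the scaling-critical space, and a long-time decay estimate for the reference solution (this last being where the hypothesis $p<2$ is used). Fix a point $(\bar u_0,\bar\alpha)$ of the set and let $\bar u$ be the corresponding solution: global, smooth for $t>0$, with $\bar u\in C([0,\infty),H^1)$, hence — by uniqueness of $H^1$-solutions and parabolic smoothing — $\bar u\in C([0,\infty),H^{5/4})$ and $\bar u\in L^\infty_{\mathrm{loc}}((0,\infty),H^s)$ for every $s$. The first ingredient is finite-time stability. Since $H^{5/4}$ is subcritical for the order-$\alpha$ equation whenever $\alpha>\tfrac58$, there is a standard local theory in $H^{5/4}$, with existence time controlled by the norm of the datum and depending continuously on $\alpha$ on compact subsets of $(\tfrac34,\tfrac54]$, together with the continuation criterion that boundedness of $\|u(t)\|_{H^{5/4}}$ excludes blow-up. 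Given a datum $u_0$ close to $\bar u_0$ in $H^{5/4}$ and an order $\alpha$ close to $\bar\alpha$, let $u$ be the maximal $H^{5/4}$-solution and set $w:=u-\bar u$; then
\[
\partial_t w+(-\Delta)^\alpha w+\nabla(p-\bar p)=-(w\cdot\nabla)w-(w\cdot\nabla)\bar u-(\bar u\cdot\nabla)w+F,\qquad \div w=0,\quad w(0)=u_0-\bar u_0,
\]
with $F:=\big[(-\Delta)^{\bar\alpha}-(-\Delta)^\alpha\big]\bar u$. The multiplier of $F$ is $|\xi|^{2\bar\alpha}-|\xi|^{2\alpha}$, which is $O\big(|\alpha-\bar\alpha|\,\langle\log|\xi|\rangle\,(|\xi|^{2\alpha}+|\xi|^{2\bar\alpha})\big)$ for $|\alpha-\bar\alpha|$ small, so, using that $\bar u(t)$ lies in every Sobolev space, $\|F\|_{C([0,T],H^s)}\le C(s,T,\bar u)\,|\alpha-\bar\alpha|$ for all $s,T$. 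An $H^{5/4}$ energy estimate — the terms linear in $w$ absorbed via the boundedness of $\bar u$ in high norms, the quadratic term handled exactly as in the subcritical local theory, $F$ by Cauchy–Schwarz — gives, as long as $\|w(t)\|_{H^{5/4}}\le 1$,
\[
\frac{\mathrm d}{\mathrm dt}\|w\|_{H^{5/4}}^2+c\,\|w\|_{H^{5/4+\alpha}}^2\le C(T,\bar u)\big(\|w\|_{H^{5/4}}^2+|\alpha-\bar\alpha|^2\big),
\]
and Grönwall yields $\sup_{[0,T]}\|w\|_{H^{5/4}}^2\le\big(\|u_0-\bar u_0\|_{H^{5/4}}^2+|\alpha-\bar\alpha|^2\big)e^{C(T,\bar u)}$; once the right side is $<1$ the bound is self-improving by continuity in time, so for every $T$ there is a neighbourhood of $(\bar u_0,\bar\alpha)$ on which $u$ is smooth on $[0,T]$ with $\sup_{[0,T]}\|u-\bar u\|_{H^{5/4}}$ as small as we wish.

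The second ingredient is a small-data global regularity theorem together with decay of $\bar u$. There is $\epsilon_0(\alpha)>0$, bounded below on compact subsets of $(\tfrac34,\tfrac54]$, such that $\|v_0\|_{\dot H^{s_c(\alpha)}}\le\epsilon_0(\alpha)$, with $s_c(\alpha):=\tfrac52-2\alpha\in[0,1)$ the critical Sobolev exponent, forces the order-$\alpha$ equation to have a global smooth solution from $v_0$, lying in $C([0,\infty),H^1)$ if $v_0\in H^1$ (a Fujita–Kato type result; the critical norm is $\dot H^{s_c}$, with Lebesgue counterpart $L^{3/(2\alpha-1)}$). Since $\bar u_0\in L^p$ with $p<2$, the Fourier-splitting (Schonbek) method adapted to fractional dissipation gives $\|\bar u(t)\|_{L^2}\to0$; combining this with $\int_0^\infty\|\bar u\|_{\dot H^{\bar\alpha}}^2<\infty$ from the energy identity and the smoothing of $e^{-t(-\Delta)^{\bar\alpha}}$ yields $\|\bar u(t_k)\|_{\dot H^{s_c(\bar\alpha)}}\to0$ along some $t_k\to\infty$. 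Fix $T_*=t_k$ with $\|\bar u(T_*)\|_{\dot H^{s_c(\bar\alpha)}}<\tfrac14\epsilon_0(\bar\alpha)$; since $\bar u(T_*)$ is smooth and rapidly decaying, $\alpha\mapsto\|\bar u(T_*)\|_{\dot H^{s_c(\alpha)}}$ is continuous at $\bar\alpha$, so $\|\bar u(T_*)\|_{\dot H^{s_c(\alpha)}}<\tfrac12\epsilon_0(\alpha)$ for $\alpha$ near $\bar\alpha$.

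To conclude, apply finite-time stability with $T=T_*$: there is a neighbourhood $\mathcal U$ of $(\bar u_0,\bar\alpha)$ in the product topology of $(L^p\cap H^{5/4})\times(\tfrac34,\tfrac54]$ such that for $(u_0,\alpha)\in\mathcal U$ the solution $u$ is smooth on $[0,T_*]$ and $\|u(T_*)-\bar u(T_*)\|_{H^1}<\tfrac14\epsilon_0(\bar\alpha)$. As $\|f\|_{\dot H^{s}}\le\|f\|_{H^1}$ uniformly for $s\in[0,1]$, we obtain $\|u(T_*)\|_{\dot H^{s_c(\alpha)}}\le\epsilon_0(\alpha)$, so the small-data theorem extends $u$ past $T_*$ to a global smooth solution in $C([T_*,\infty),H^1)$; glued with $[0,T_*]$ this is the desired global smooth solution in $C([0,\infty),H^1)$ (unique in the $H^1$ class by the local theory). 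Hence every point of the set has a neighbourhood contained in it, i.e.\ the set is open. I expect the main obstacle to be the decay step: the $L^p$ hypothesis directly gives only $L^2$-decay, which is the critical norm only at $\alpha=\tfrac54$ and is supercritical below, so transferring decay to $\dot H^{s_c}$ (or a critical Lebesgue space) over the whole range $\alpha\in(\tfrac34,\tfrac54]$ requires carefully combining Fourier splitting with the smoothing of $e^{-t(-\Delta)^\alpha}$ and the a priori regularity of $\bar u$, uniformly in $\alpha$ near $\bar\alpha$; a secondary technical point is keeping the constants in the $H^{5/4}$ stability estimate finite and locally uniform as $\alpha$ varies, which is unproblematic since $\bar\alpha>\tfrac34$ strictly.
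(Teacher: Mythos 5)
Your overall architecture (finite-time stability in a subcritical norm, plus a mechanism guaranteeing regularity after a large time $T_*$ that is stable under the perturbation) is the same as the paper's, and your finite-time stability step is essentially the paper's Proposition~\ref{prop: locstab H3} combined with Lemma~\ref{lem: fraclapRn} (modulo the minor point that $\|F\|_{C([0,T],H^s)}\le C|\alpha-\bar\alpha|$ cannot hold up to $t=0$ when $\bar u_0$ is only $H^{5/4}$, so the forcing term must be handled in an integrated norm such as $L^2_tH^{s+\alpha+\delta}_x$, exactly as in \eqref{eq: diffineq}). The genuine problem is in your second ingredient, and it is not merely the technical obstacle you flag at the end --- it is a structural failure of the chosen continuation criterion in part of the range. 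You propose to continue past $T_*$ by making the critical norm $\|\bar u(T_*)\|_{\dot H^{s_c(\alpha)}}$, $s_c(\alpha)=\tfrac52-2\alpha$, small and invoking a Fujita--Kato theorem. The only quantities the a priori theory controls globally in time are $\|\bar u(t)\|_{L^2}$ (which decays, using $u_0\in L^p$, $p<2$) and $\int_0^\infty\|\bar u\|_{\dot H^{\alpha}}^2\,\mathrm{d}t$. Interpolation gives $\|\bar u(t_k)\|_{\dot H^{s_c}}\to 0$ along a sequence only when $s_c(\alpha)\le\alpha$, i.e.\ $\alpha\ge\tfrac56$. For $\alpha\in(\tfrac34,\tfrac56)$ one has $s_c(\alpha)>\alpha$: both controlled norms sit at or below critical scaling, and no amount of combining Fourier splitting with the smoothing of $e^{-t(-\Delta)^\alpha}$ and qualitative smoothness of $\bar u$ produces \emph{decay} of a strictly more singular homogeneous norm (you would need a uniform-in-time bound on some subcritical norm of $\bar u$, which is not available from the hypotheses). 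This is precisely the breakdown the paper points out at $\alpha=\tfrac56$ in the Introduction, and it is why Section~\ref{sec:reg-suit} exists. In addition, the small-data global well-posedness in $\dot H^{s_c(\alpha)}$ for the ipodissipative range, with $\epsilon_0(\alpha)$ bounded below, is itself an unproved ingredient in your write-up, though a standard one.

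The paper's way around this is to replace your pointwise-in-time critical-norm criterion by a local, space-time scaling-invariant one: the $\epsilon$-regularity threshold \eqref{eq: epsilonreg} for suitable weak solutions, whose left-hand side is controlled (see \eqref{eq: suitsol smallnessreq}) by a product of powers of $\|u\|_{L^\infty_tL^2_x}$ and $\|(-\Delta)^{\alpha/2}u\|_{L^2_{t,x}}$ --- i.e.\ exactly by the two globally controlled quantities --- so that the $L^2$-decay of \cite{JY} alone forces regularity after some $T^*(\|v_0\|_{L^2\cap L^p},\alpha,p)$. Moreover the paper applies this eventual regularization directly to the \emph{perturbed} solution $v$ (uniformly over the neighbourhood, since $T^*$ depends only on $\|v_0\|_{L^2\cap L^p}$ and $\beta$), which removes the need to transfer smallness from $\bar u(T_*)$ to $u(T_*)$ altogether. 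For $\alpha\in(\tfrac56,\tfrac54]$ your scheme does close (there one can even dispense with the $L^p$ hypothesis, cf.\ Corollary~\ref{cor: LeraySingTime} and Remark~\ref{rmk: dropLp}), but as written the proof does not cover $\alpha\in(\tfrac34,\tfrac56]$, so the theorem is not established on the stated range.
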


When $\alpha \geq \frac 54$ and {$u_0$ is smooth with sufficient decay at infinity}, the existence of global smooth solutions is well-known since \cite{Lions}.  The attempt to build global smooth solutions for supercritical {Navier-Stokes} equations has been widely pursued. It has been done successfully for small initial data in scaling invariant norms, as in the classical results of Kato for $\alpha=1$. 

In a different spirit, \cite{Tao} proved that the existence of a global regular solution still holds for any sufficiently regular initial datum when the right-hand side of the first equation in \eqref{eq: NSalpha} is replaced by a logarithmically supercritical operator; later, this result was generalized with a Dini-type condition in \cite{BMR}.
Recently, a result similar to Theorem~\ref{thm:main1} was obtained in \cite{CDM}, showing that for any $H^1$ initial datum there
are global smooth solutions of \eqref{eq: NSalpha} whenever $\alpha$ is sufficiently close to $\frac 54$ (and this closeness is uniform on bounded subsets of $H^1$). This was proven by means of an $\epsilon$-regularity result on a suitable notion of weak solutions, the known global regularity at $\alpha=\frac 54$ and a compactness argument. The present paper provides a simpler proof with respect to \cite{CDM}, and, not relaying on any contradiction argument, it has the advantage to provide an explicit $\eps$ depending only on the size of the initial datum.

The paper is organized as follows. After recalling different notions of weak solutions  to the system \eqref{eq: NSalpha} in Section~\ref{sec:prelim}, we prove a stability result on any finite time interval (of arbitrary length) in the fractional order and with respect of variations of the initial datum in Section~\ref{prop: locstab H3}. This estimate holds for any $\alpha \in (0,\frac32)$, but for low $\alpha$ we require stronger norms in the convergence of initial data. In Section~\ref{sec:leray}, we use the dissipation to pass from  a local stability to a global result. In particular, following the ideas of Leray~\cite{Leray} (recently revisited in \cite{JiuWang} to cover the equation with fractional dissipation) we show the eventual regularization of the Navier-Stokes equation for $\alpha > \frac 56$. In turn, this kind of argument breaks down at $\alpha=\frac56$ since both norms $\|u(\cdot,t)\|_{L^2}$ and $\|(-\Delta)^{\frac\alpha2}u(\cdot,t)\|_{L^2}$ become scaling critical at this exponent.
Hence, we answer in Section~\ref{sec:reg-suit} an open question in \cite{JiuWang} by showing that, even for $\alpha\in (\frac34, \frac56)$, the eventual regularization holds, relying this time on partial regularity arguments.


A result of the type of Theorem~\ref{thm:main1} was recently obtained for the supercritical nonlinear wave euqation in \cite{CH}, the SQG equation in \cite{Cotivicol} and has been generalized to other active scalar equations \cite{SGS}; we expect that also a stability result as Theorem~\ref{thm:main2} could be suitably adapted to their context.
The long-time regularity relies in the case of the SQG equation on the scalar nature of the equation and on the maximum principle, indeed it works for any fractional dissipation $\alpha \in (0,1)$; a similar argument does not appear to apply to the Navier-Stokes equations.

\section{Preliminaries}\label{sec:prelim}
%

\subsection{Leray-Hopf solutions} We recall the classical concept of Leray-Hopf solutions introduced in the seminal papers of Leray \cite{Leray} and Hopf \cite{Hopf}.

\begin{definition}Let $u_0 \in L^2(\mathbb{R}^3)$ divergence-free. A Leray-Hopf solution is a distributional solution  $(u,p)$ of \eqref{eq: NSalpha} on $\mathbb{R}^3 \times (0,T)$ such that 
\begin{enumerate}[i)]
\item $u\in L^\infty((0,T), L^2(\mathbb{R}^3)) \cap L^2((0,T), H^\alpha(\mathbb{R}^3))$,
\item $p$ is the potential-theoretic solution of $-\Delta p = \div \div u\otimes u$,
\item For every $t\in (0,T)$, for $s=0$ and for almost every $0<s<t$ there holds the global energy inequality
\begin{align}
\label{eq: energyineq2}
\frac{1}{2} \int \lvert u\rvert^2 (x,t) \, \mathrm{d}x + \int_s^t \int \lvert (-\Delta)^{\alpha/2}  u \rvert^2(x, \tau) \, \mathrm{d}x  \, \mathrm{d}\tau &\leq \frac{1}{2} \int \lvert u \rvert^2 (x,s)\ \, \mathrm{d}x \, .
\end{align}
\end{enumerate}
\end{definition}
From \eqref{eq: energyineq2} we deduce that up to changing $u$ on a set of measure $0$, we have $u\in C([0,T),L^2_w)$.
\begin{theorem}[Existence of Leray solutions]\label{thm: existLerayHopf} Let $\alpha>0$ and $u_0 \in L^2$ divergence-free. Then there exists a Leray-Hopf solution on $\mathbb{R}^3 \times (0,+\infty)$ to  \eqref{eq: NSalpha}.
\end{theorem}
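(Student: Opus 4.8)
The plan is the classical Galerkin construction together with a localized compactness argument, followed by verification of the energy inequality via weak lower semicontinuity. First I would fix a sequence of finite--dimensional spaces $V_n \subset H^\alpha(\mathbb{R}^3;\mathbb{R}^3)$ spanned by smooth, divergence--free vector fields, with $\overline{\bigcup_n V_n} = \{v \in H^\alpha : \div v = 0\}$, and let $P_n$ denote $L^2$--orthogonal projection onto $V_n$. One then solves the Galerkin ODE system
\begin{equation}
\partial_t u_n + P_n\big((u_n\cdot\nabla)u_n\big) + (-\Delta)^\alpha u_n = 0, \qquad u_n(\cdot,0)=P_nu_0,
\end{equation}
which, being a locally Lipschitz ODE on the finite--dimensional space $V_n$, has a local solution. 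Testing with $u_n$, the nonlinear term drops because $u_n$ is divergence--free, yielding the identity
\begin{equation}
\tfrac12\norm{u_n(\cdot,t)}_{L^2}^2 + \int_0^t \norm{(-\Delta)^{\alpha/2}u_n(\cdot,\tau)}_{L^2}^2\,\mathrm{d}\tau = \tfrac12\norm{P_nu_0}_{L^2}^2 \le \tfrac12\norm{u_0}_{L^2}^2 ,
\end{equation}
so the $u_n$ are global and uniformly bounded in $L^\infty((0,T),L^2)\cap L^2((0,T),H^\alpha)$ for every $T$.

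Next I would extract a convergent subsequence. From the uniform bounds, Sobolev embedding and interpolation give a uniform bound for $u_n$ in $L^2((0,T),L^q)$ for a suitable $q>2$ (any $q<\infty$ if $\alpha\ge 3/2$), hence for $u_n\otimes u_n$ in $L^1((0,T),L^{q/2})$, so that $\partial_t u_n = -P_n\div(u_n\otimes u_n) - P_n(-\Delta)^\alpha u_n$ is bounded in $L^1((0,T),H^{-s})$ for some large $s=s(\alpha)$. Applying the Aubin--Lions--Simon lemma on each ball $B_R$, using the compact embedding $H^\alpha(B_R)\hookrightarrow\hookrightarrow L^2(B_R)\hookrightarrow H^{-s}(B_R)$, and a diagonal argument over $R\to\infty$, I obtain (along a subsequence) $u_n\to u$ strongly in $L^2_{\mathrm{loc}}(\mathbb{R}^3\times(0,T))$, weakly--$*$ in $L^\infty((0,T),L^2)$ and weakly in $L^2((0,T),H^\alpha)$, with $\div u = 0$. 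The strong local convergence upgrades $u_n\otimes u_n\to u\otimes u$ in $L^1_{\mathrm{loc}}$, which suffices to pass to the limit in the weak formulation tested against smooth compactly supported divergence--free fields; defining $p$ as the potential--theoretic solution of $-\Delta p = \div\div(u\otimes u)$ recovers the full system \eqref{eq: NSalpha} in the sense of distributions, and a further diagonalization over a countable time interval exhausting $(0,T)$ produces a solution on $(0,+\infty)$.

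Finally, the energy inequality \eqref{eq: energyineq2} follows by passing to the limit in the Galerkin energy equality: for $s=0$ we use $\norm{P_nu_0}_{L^2}\le\norm{u_0}_{L^2}$ together with weak lower semicontinuity of $t\mapsto\norm{u(\cdot,t)}_{L^2}^2$ (legitimate after modifying $u$ on a null set so that $u\in C([0,T),L^2_w)$, as noted after the definition) and of the dissipation integral under weak $L^2_tH^\alpha$ convergence; for almost every $s>0$ one runs the same argument starting from a common Lebesgue point of the time slices along a further subsequence on which $u_n(\cdot,s)\to u(\cdot,s)$ in $L^2$. The main obstacle is the compactness step on the unbounded domain $\mathbb{R}^3$, where $H^\alpha\hookrightarrow L^2$ is not compact: this is handled by the localization plus diagonal argument above, and one must check that the time--derivative bound lands in a usable negative Sobolev space for all $\alpha>0$ — which it does, since only local strong $L^2$ convergence is needed and the $L^\infty_tL^2$ bound alone already makes $u\otimes u$ locally integrable.
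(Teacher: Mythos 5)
Your route is genuinely different from the paper's: the paper does not run a Galerkin scheme but instead constructs the solution as the limit of Leray's mollified system \eqref{eq: regularizedNS}, citing \cite{CDM, TY} for the details; that choice is not cosmetic, since the mollified system has unique global smooth solutions satisfying a \emph{local} energy equality, which is what lets the paper upgrade the limit to a suitable weak solution (used later in Section~\ref{sec:reg-suit}) and also handles the two points where your argument is incomplete. The first gap is in the compactness step: to bound $\partial_t u_n$ in $L^1((0,T),H^{-s})$ you need the $L^2$-orthogonal projections $P_n$ to be uniformly bounded on $H^{s}$ (equivalently on $H^{-s}$), and for a generic nested family of finite-dimensional divergence-free subspaces of $H^\alpha(\mathbb{R}^3)$ this is false in general; on a bounded domain or the torus one fixes this by taking Stokes eigenfunctions, but on $\mathbb{R}^3$ there is no such basis, so you must either replace $V_n$ by spectral truncations $\mathbb{P}\chi_{\{|\xi|\le n\}}(D)$ (no longer finite-dimensional, so the local solvability argument changes) or prove time-equicontinuity by testing increments only against elements of $V_n$. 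As written, the assertion that $P_n\div(u_n\otimes u_n)$ is bounded in $H^{-s}$ does not follow.

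The second, more serious gap concerns the strong energy inequality, which is part of the paper's definition of a Leray--Hopf solution (item (iii), for a.e.\ $s>0$). Passing to the limit in the Galerkin energy identity from time $s$ requires $\limsup_n \norm{u_n(\cdot,s)}_{L^2(\mathbb{R}^3)}^2 \le \norm{u(\cdot,s)}_{L^2(\mathbb{R}^3)}^2$, i.e.\ \emph{strong} convergence of the time slices in $L^2$ of the whole space; weak convergence gives the opposite inequality. Your compactness only yields $u_n\to u$ in $L^2_{\mathrm{loc}}(\mathbb{R}^3\times(0,T))$, hence $u_n(\cdot,s)\to u(\cdot,s)$ in $L^2_{\mathrm{loc}}(\mathbb{R}^3)$ for a.e.\ $s$ along a subsequence, and the sentence ``a further subsequence on which $u_n(\cdot,s)\to u(\cdot,s)$ in $L^2$'' asserts exactly the missing ingredient: one must rule out escape of mass to spatial infinity, which requires a uniform tail estimate (typically obtained from a localized energy inequality with cut-offs near infinity, i.e.\ precisely the kind of local estimate the mollified system provides and the Galerkin system does not obviously satisfy). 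The $s=0$ case of \eqref{eq: energyineq2} is fine as you argue, since $\norm{P_nu_0}_{L^2}\to\norm{u_0}_{L^2}$ and only weak lower semicontinuity is needed on the left-hand side.
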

Let us recall from \cite{CDM, TY} that in the fractional case a Leray-Hopf solution can still be constructed following Leray's strategy in \cite{Leray}, that is as limit of solutions of the regularized system
\begin{equation}\label{eq: regularizedNS}
\begin{cases}
\partial_t u + ((u \ast \varphi_\epsilon) \cdot \nabla) u + \nabla p = -(-\Delta)^\alpha u \\
\div u =0 
 \, ,
\end{cases}
\end{equation} with the same initial datum $u_0$, 
where $\{\varphi_\epsilon \}_{\epsilon>0}$ is a family of mollifiers in space. Indeed, \eqref{eq: regularizedNS} admits a unique solution $u_\epsilon \in C([0,+\infty), L^2) \cap L^2_{loc}([0,+\infty), \dot{H}^\alpha)$ which is smooth in the interior and satisfies the local and global energy equality associated to \eqref{eq: regularizedNS}. The pressure $p_\epsilon$ can be assumed to be the potential-theoretic solution of
\begin{equation}
-\Delta p_\epsilon = \div  \big(((u_\epsilon \ast \varphi_\epsilon) \cdot \nabla) u_\epsilon\big) \, .
\end{equation}
Then there exists $u\in L^\infty([0,+\infty), L^2) \cap L^2([0,+\infty), \dot{H}^\alpha)$ such that for any $2 \leq p<\frac{6+4\alpha}{3}$ we have $u_\epsilon \rightarrow u$ strongly in $L^p_{loc}([0,+\infty) \times \mathbb{R}^3)$ and $p_\epsilon \rightarrow p$ strongly in $L^{p/2}_{loc}([0,+\infty) \times \mathbb{R}^3)$, where $p$ is now the potential theoretic solution of $-\Delta p=\div \div u\otimes u$. The pair $(u, p)$ is a Leray-Hopf solution of \eqref{eq: NSalpha}. Moreover, since $u_\epsilon$  satisfies even the local energy equality, the obtained weak solution $(u,p)$ is in fact even a suitable weak solution.

A point $(x, t)\in \mathbb{R}^3 \times (0,+\infty)$ is called a regular point of a Leray-Hopf 
solution $(u,p)$  if there is a cylinder $Q_r(x, t)$ where $u$ is continuous. We denote by $Sing(u)$ the (relatively closed) set of points which are not regular. By classical boot-strap methods, we know that if $\alpha>\frac{1}{2}$ and $u \in L^\infty(Q_r(x,t))$ for some $r>0$, then $(x,t)$ is regular.

\subsection{Suitable weak solutions} Suitable weak solutions for the classical Navier-Stokes system have been introduced by Scheffer \cite{Scheffer1, Scheffer2} and Caffarelli-Kohn-Nirenberg \cite{CKN}. Only recently, the concept has been adapted to the ipodissipative range $\alpha \in (0,1)$ in \cite{TY} and to the hyperdissipative range $\alpha \in (1,2)$ in \cite{CDM}. The main ingredient is the Caffarelli-Silvestre extension for the fractional Laplacian \cite{CS} which allows to write a localized energy inequality also in the non-local setting. The existence of suitable weak solutions is obtained again through the regularization \eqref{eq: regularizedNS}, as shown in \cite{TY,CDM}. We recall here the notion of suitable weak solution for $\alpha \in (0,1)$, which will be essential to show the eventual regularization of solutions in Section~\ref{sec:reg-suit}.  
\begin{definition}Let $\alpha \in (0,1)$. A Leray-Hopf solution $(u,p)$ on $\mathbb{R}^3 \times (0,T)$ is a suitable weak solution, if for every $\varphi\in C^\infty_c(\mathbb{R}^4_+\times (0,T))$ with $\partial_y \varphi(\cdot, 0, \cdot)=0$ on $\mathbb{R}^3 \times (0,T)$ and $\varphi \geq 0$ and almost every $t\in (0,T)$, there holds the localized energy inequality
\begin{align}\label{eq: localized energy inequality ipo}
\frac{1}{2}\int_{\mathbb{R}^3} &\lvert u \rvert^2 (x,t)  \varphi(x,0,t) \, \mathrm{d}x+  c_\alpha \int_0^t \int_{\mathbb{R}^4_+} y^b \lvert \overline{\nabla} u^\ast \rvert^2 \varphi \, \mathrm{d}x \, \mathrm{d}y \, \mathrm{d}\tau  \\
&\leq \int_0^t \int_{\mathbb{R}^3} \left(\frac{\lvert u \rvert^2}{2} \partial_t \varphi \vert_{y=0} + \left( \frac{\lvert u \rvert^2}{2}+ p\right) u\cdot \nabla \varphi\vert_{y=0} \right)\, \mathrm{d}x \, \mathrm{d}\tau +c_\alpha \int_0^t \int_{\mathbb{R}^4_+} y^b \lvert u^\ast \rvert^2  \overline{\Delta}_b \varphi   \, \mathrm{d}x \,  \mathrm{d}y \, \mathrm{d}\tau \, ,\nonumber
\end{align}
where $b:=1-2\alpha$, $u^\ast$ is the Caffarelli-Silvestre extension of order $\alpha$, the constant $c_\alpha$ the associated normalizing constant and $\overline{\Delta_b} u^\ast:= y^{-b}\div( y^b \overline{\nabla} u^\ast)$.
\end{definition}

\subsection{Weak-strong uniqueness}

Leray showed that Leray–Hopf solutions coincide with the classical solutions as long as the latter exist. Indeed, consider two Leray-Hopf solutions $u$ and $v$ with the same initial datum. Assuming smoothness, we can multiply the difference equation by $(u-v)$ and integrate in space. By incompressibility, we obtain
\begin{equation}\label{eq: diffeqLerayHopf}
\frac{1}{2} \frac{\mathrm{d}}{\mathrm{d}t} \int \lvert u-v\rvert^2(x,t) \, \mathrm{d}x + \int \lvert D(u-v) \rvert^2 \, \mathrm{d}x = \int ((u-v) \cdot \nabla)(u-v) \cdot v \, \mathrm{d}x \, .
\end{equation}
Leray noticed that through regularization we can still derive \eqref{eq: diffeqLerayHopf} with an inequality, provided $v\in L^2((0,T), L^\infty)$ only. 
Uniqueness then follows from a standard Gr\"onwall argument. As expected, $v\in L^2((0,T), L^\infty)$ still gives uniqueness in the hyperdissipative range $\alpha>1$ (see \cite{CDM}); this requirement can even be weakened using the stronger dissipation on the left-hand side (see forthcoming Proposition \ref{prop: weakstrong uniq}). In the ipodissipative case $0<\alpha<1$ however, it only holds $u-v \in L^2((0,T), H^\alpha)$ and the right-hand side of \eqref{eq: diffeqLerayHopf} is not meaningful, written in this form. Instead assuming smoothness, we observe that by incompressibility
\begin{equation}\label{eq: diffeqLerayHopf ipo}
 \int ((u-v) \cdot \nabla)(u-v) \cdot v \, \mathrm{d}x  =   \int \div \left((u-v) \otimes (u-v)\right) \cdot v \, \mathrm{d}x  \,.
\end{equation}
Integrating by parts $(-\Delta)^{(1-\alpha)/2}$-derivatives on $v$, this allows to deduce a weak-strong uniqueness criterion involving only assumptions on the integrability of $(-\Delta)^{(1-\alpha)/2}v$ and $v$. For $\alpha= 1$, this criterion recovers the classical one $v\in L^2((0,T), L^\infty)$.

\begin{prop}\label{prop: weakstrong uniq} 
Let $0<\alpha<\frac{3}{2}$. Let $(u,p)$ and $(v,q)$ be Leray-Hopf solutions of \eqref{eq: NSalpha} on $\mathbb{R}^3\times (0,T)$ with common initial datum $u_0 \in L^2$ with $\div u_0=0$. If we additionally assume that 
\begin{equation}\label{eq: WS} 
\begin{cases}
 v \in L^2((0,T), L^{\frac 3{\alpha-1}}) \, \qquad& \mbox{if $\alpha \in \big[ 1, \frac 32\big)$}\, ,\\
(-\Delta)^{(1-\alpha)/2} v \in L^2((0,T), L^\infty) \, \qquad& \mbox{if $\alpha \in \big[\frac 34,1\big)$}\, ,\\
(-\Delta)^{(1-\alpha)/2} v \in L^2((0,T), L^\infty) \mbox{ and }  v \in L^2((0,T), L^\frac{3}{\alpha})\, \qquad& \mbox{if $\alpha \in \big(0,\frac 34\big)$}\, ,
\end{cases}
\end{equation}
then $u=v$ in $L^2$ on $(0,T)$.
\end{prop}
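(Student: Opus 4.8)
The plan is to derive a Grönwall inequality for $w:=u-v$. Since Leray--Hopf solutions satisfy only the energy \emph{inequality}, I would first, after a routine mollification in time that gives meaning to the pairing $\langle\partial_t u,v\rangle+\langle u,\partial_t v\rangle$, add three relations: the global energy inequality \eqref{eq: energyineq2} for $u$ with $s=0$; the same for $v$; and the cross-term identity
\begin{equation*}
\int u(t)\cdot v(t)\,\mathrm dx+2\int_0^t\!\!\int (-\Delta)^{\alpha/2}u\cdot(-\Delta)^{\alpha/2}v=\int|u_0|^2-\int_0^t\!\!\int\big[(u\cdot\nabla)u\cdot v+(v\cdot\nabla)v\cdot u\big]\,,
\end{equation*}
which holds as an \emph{equality} because $v$ has the extra regularity \eqref{eq: WS}, so that $v$ is an admissible test function in the weak formulation of the equation for $u$ and, symmetrically, $u$ in that for $v$ (in particular the two convective integrals converge). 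Using $\div u=\div v=0$ and the cancellations $\int(v\cdot\nabla)v\cdot v=\int(w\cdot\nabla)v\cdot v=0$, $\int(v\cdot\nabla)w\cdot v=-\int(v\cdot\nabla)v\cdot w$, all terms in $v$ alone disappear and, since $w(0)=0$, one is left with
\begin{equation*}
\tfrac12\int|w|^2(x,t)\,\mathrm dx+\int_0^t\!\!\int|(-\Delta)^{\alpha/2}w|^2\ \le\ \int_0^t\!\!\int (w\cdot\nabla)w\cdot v\,.
\end{equation*}
Everything then reduces to bounding the right-hand side by $\tfrac12\|(-\Delta)^{\alpha/2}w\|_{L^2}^2+\Phi(\tau)\|w\|_{L^2}^2$ with $\Phi\in L^1(0,T)$; then Grönwall in integral form, with $w(0)=0$, gives $w\equiv 0$.

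For $\alpha\in[1,\tfrac32)$ I would \emph{not} integrate by parts. Hölder with exponents $\big(2,\tfrac{6}{5-2\alpha},\tfrac{3}{\alpha-1}\big)$ (read $L^\infty$ when $\alpha=1$) gives $\int(w\cdot\nabla)w\cdot v\le\|w\|_{L^2}\|\nabla w\|_{L^{6/(5-2\alpha)}}\|v\|_{L^{3/(\alpha-1)}}$; since $\nabla w\in\dot H^{\alpha-1}$ with $\dot H^{\alpha-1}(\mathbb R^3)\hookrightarrow L^{6/(5-2\alpha)}$ and $\|\nabla w\|_{\dot H^{\alpha-1}}=\|(-\Delta)^{\alpha/2}w\|_{L^2}$, this is $\le C\|w\|_{L^2}\|(-\Delta)^{\alpha/2}w\|_{L^2}\|v\|_{L^{3/(\alpha-1)}}$. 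The three exponents are pinned precisely so that only $\|w\|_{L^2}$ (and no $\dot H^{\alpha}$ norm of $w$) appears in the first factor; this is exactly where the extra dissipation available for $\alpha>1$ is spent, and it is what allows the weaker Serrin-type exponent $3/(\alpha-1)<\infty$ in place of $L^\infty$. Young's inequality absorbs $\|(-\Delta)^{\alpha/2}w\|_{L^2}^2$ and leaves $\Phi=C\|v\|_{L^{3/(\alpha-1)}}^2\in L^1(0,T)$ by hypothesis, closing this range.

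For $\alpha<1$ the above fails because $\nabla w$ lies in the \emph{negative}-order space $\dot H^{\alpha-1}$. Following the reformulation \eqref{eq: diffeqLerayHopf ipo}, I would write $(w\cdot\nabla)w=\div(w\otimes w)$ and move $(1-\alpha)/2$ fractional derivatives onto $v$ by self-adjointness of $(-\Delta)^{(1-\alpha)/2}$:
\begin{equation*}
\int(w\cdot\nabla)w\cdot v\,\mathrm dx=\int (-\Delta)^{-(1-\alpha)/2}\div(w\otimes w)\cdot(-\Delta)^{(1-\alpha)/2}v\,\mathrm dx\,.
\end{equation*}
The last factor is paired with the hypothesis $(-\Delta)^{(1-\alpha)/2}v\in L^2((0,T),L^\infty)$, and I would control the first factor --- an operator of order $\alpha$ applied to $w\otimes w$ --- in $L^1_x$ by $C\|w\|_{L^2}\|(-\Delta)^{\alpha/2}w\|_{L^2}$ via a fractional Leibniz (Kato--Ponce) estimate: the leading pieces, of type $w\,(-\Delta)^{\alpha/2}w$, are in $L^1_x$ by Cauchy--Schwarz, while the $L^1$-endpoint subtlety (neither the Riesz-transform factor nor the Leibniz remainder is $L^1$-bounded) is resolved by the compensated structure $\div w=0$: indeed $(w\cdot\nabla)w=\nabla\tfrac{|w|^2}{2}+(\operatorname{curl}w)\times w$, the gradient drops against $v$ since $\div v=0$, and $(\operatorname{curl}w)\times w$ is of divergence-free-times-curl-free type, so a fractional div--curl (Hardy-space $\mathcal H^1$) estimate applies. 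Young and Grönwall then close $\alpha\in[\tfrac34,1)$. For $\alpha\in(0,\tfrac34)$ the interpolation $w\in L^\infty L^2\cap L^2\dot H^\alpha$ no longer leaves enough room to place $w\otimes w$ in the right fractional space with a full factor $\|w\|_{L^2}$ to spare, so one Hölder factor of $w$ must instead be estimated in $L^{6/(3-2\alpha)}$ by $\|(-\Delta)^{\alpha/2}w\|_{L^2}$ and paid for with a factor of $v$ in $L^{3/\alpha}$ --- precisely the extra hypothesis in \eqref{eq: WS} --- after which Young and Grönwall again give $u=v$.

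The main obstacle is the ipodissipative nonlinear estimate, and inside it the $L^1$/Hardy-space endpoint together with the book-keeping of the interpolation gap separating $\alpha\ge\tfrac34$ from $\alpha<\tfrac34$; the time-mollification needed to legitimize the cross-term identity with the merely weak regularity of $u$ is standard (cf.\ \cite{TY}) but must be carried out with care.
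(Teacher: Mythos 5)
Your proposal follows essentially the same route as the paper: the energy inequality for the difference via Leray's regularization, the H\"older--Sobolev estimate with exponents $\big(2,\tfrac{6}{5-2\alpha},\tfrac{3}{\alpha-1}\big)$ together with $\dot H^{\alpha-1}\hookrightarrow L^{6/(5-2\alpha)}$ for $\alpha\ge 1$, and for $\alpha<1$ the rewriting $(w\cdot\nabla)w=\div(w\otimes w)$ followed by integration by parts of $(-\Delta)^{(1-\alpha)/2}$-derivatives onto $v$ and a commutator estimate producing the right-hand side $C\|(-\Delta)^{(1-\alpha)/2}v\|_{L^\infty}\int|(-\Delta)^{\alpha/2}w||w|$, after which Gr\"onwall closes the argument. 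The one divergence is the role of the extra hypothesis $v\in L^2((0,T),L^{3/\alpha})$ for $\alpha<\tfrac34$: the paper uses it only to ensure $|v|^2|u|,\ |p||v|,\ |u||q|\in L^1$ so that the divergence/cutoff terms vanish (the main commutator bound being the same throughout $\alpha\in(0,1)$), whereas you feed it into the nonlinear estimate itself; conversely, your explicit handling of the $L^1$/Riesz-transform endpoint via the div--curl structure spells out a point the paper leaves implicit under ``appropriate commutator estimates.''
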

\begin{proof}
We can assume that $u,v \in C((0,T), L^2_w)$. Using the regularization of Leray and appropriate commutator estimates justifying the integration by parts of $(-\Delta)^{(1-\alpha)/2}$-derivatives on $v$, it is straight-forward to show that for every $t\in (0,T)$ it holds
\begin{align}\label{eq: diffeq leray sol}
\frac{1}{2} \int  \lvert u-v\rvert^2 (x,t)  \, &\mathrm{d}x + \int_0^t \int \lvert (-\Delta)^{\alpha/2} (u-v) \rvert^2(x, \tau) \, \mathrm{d}x \, \mathrm{d}\tau \nonumber  \\ &\leq C \int_0^t \norm{(-\Delta)^{(1-\alpha)/2}v(\cdot, \tau)}_{L^\infty} \int \lvert (-\Delta)^{\alpha/2}(u-v) \rvert  \lvert u-v\rvert (x, \tau)\, \mathrm{d}x \, \mathrm{d}\tau \, .
\end{align}
Observe that under the assumption \eqref{eq: WS}, the right-hand side of \eqref{eq: diffeq leray sol} is well-defined. The additional requirement $v\in L^2((0,T), L^\frac{3}{\alpha})$ in case $0<\alpha < \frac{3}{4}$ comes to ensure that the divergence terms vanish, that is that $\lvert v \rvert^2 \lvert u \rvert, \lvert p \rvert \lvert v \rvert, \lvert u \rvert \lvert q \rvert  \in L^1\,.$ (See proof of Lemma \ref{lem: energyineq for diffeq H1} for a similar reasoning.)
Thus by reabsorbing on the left-hand side, we obtain for every $t\in (0,T)$
\begin{equation*}
\int \lvert u-v \rvert^2 (x,t) \, \mathrm{d}x \leq C  \int_0^t \norm{(-\Delta)^{(1-\alpha)/2}v(\cdot, \tau)}_{L^\infty}^2 \int \lvert u-v \rvert^2 (x, \tau)\, \mathrm{d}x \, \mathrm{d}\tau \, .
\end{equation*}
Since $\tau \mapsto \norm{(-\Delta)^{(1-\alpha)/2}v(\cdot, \tau)}_{L^\infty}^2\in L^1((0,T))$ and $u(\cdot, 0)=v(\cdot, 0)$ in $L^2$, we conclude from Gr\"onwall that $u(\cdot, t)=v(\cdot, t)$ in $L^2$ for any $t \in (0,T)$. For $\alpha > 1$, we conclude analogously, using the right-hand side in the form of \eqref{eq: diffeqLerayHopf} and observing that $\norm{\nabla u}_{L^\frac{6}{5-2\alpha}} \leq C \norm{(-\Delta)^{\alpha/2} u}_{L^2}$.
\end{proof}

\subsection{Stability of constants in Sobolev and commutator estimates}
The stability of all constants in $\alpha$ will be crucial to close the argument, we will, whenever needed, keep track of them rigorously. If not stated otherwise, $C$ will denote a universal constant, independent on $\alpha$, which may change line by line. We denote by $\bar C_{\alpha}$ the constant from the embedding of $\dot{H}^\alpha(\mathbb{R}^3) \hookrightarrow L^\frac{6}{3-2\alpha}(\mathbb{R}^3)$ and we recall from its proof (for instance \cite{Stein}) that those constants are uniformly bounded in $\alpha$ away from the endpoint $\alpha=\frac{3}{2}$.

 Let $0<s<2$, $s_1,s_2 \in [0,1)$, such that $s= s_1+s_2$. Let $p_1, p_2 \in (1,\infty)$ and $r \in [1,\infty)$ be such that $\frac 1 r = \frac 1{p_1}+\frac 1{p_2}$. We recall the following Leibnitz-type inequality from \cite[Theorem A.8]{KPV} and \cite{Dancona}
\begin{equation}
\label{eqn:commuta}
\| (-\Delta)^{s/2}(uv)-u(-\Delta)^{s/2}v- (-\Delta)^{s/2}u v\|_{L^r} \leq C(s, p_1,p_2) \| (-\Delta)^{s_1/2} u \|_{L^{p_1}}\| (-\Delta)^{s_2/2} u \|_{L^{p_2}}.%
\end{equation}

To track the dependence of our constants explicitly, throughout the paper we call $\bar C$ and $\bar D$ uniform upper bounds for the constant in the Sobolev inequality and in \eqref{eqn:commuta} respectively, that is
$$\bar C:= \sup_{\alpha \in [0,\frac 54]} \max\{1,\bar C_\alpha\}, \qquad
  \bar D(s) := \sup_{p_1,p_2 \in [2, 12]}\max\{1,C(s, p_1,p_2)\}.$$
\section{Stability on finite time intervals for $\alpha>0$
}

\begin{prop}\label{prop: locstab H3}
Let $T>0$ arbitrary and $0< \alpha \leq \frac{5	}{4}$. Let $s=3$ if $\alpha \in (0, \frac{5}{4}]$, $s=2$ if $\alpha \in (\frac{1}{2}, \frac{5}{4}]$ and $s\in (\frac{5}{2}-2\alpha,1 ]$ if $\alpha\in (\frac{3}{4}, \frac{5}{4}]$. Suppose that there exists a smooth solution $u \in C([0,T], H^s) \cap L^2([0,T], H^{s+\alpha})$ to the Navier-Stokes equation of order $\alpha$ with divergence-free initial datum $u_0\in H^s(\mathbb{R}^3)$. We additionally assume that 
\begin{equation}
u \in L^2([0,T], H^{s+\alpha+\delta}) \cap L^2([0, T], H^{s+1}) \, .
\end{equation}
for some $\delta \in (0, 1]$. Then there exists $\epsilon>0$ such that for any $v_0\in H^s(\mathbb{R}^3) $ divergence-free and for any $0< \beta \leq \frac{5}{4}$ satisfying 
\begin{equation}\label{eqn:eps-choice}
\norm{u_0-v_0}_{H^s}^2 + \lvert \alpha-\beta \rvert^\delta < \epsilon, 
\end{equation}
there exists a unique solution $v \in C([0,T], H^s) \cap L^2([0,T], H^{s+\beta})$ to the fractional Navier-Stokes equations of order $\beta$ with initial datum $v_0$ which is smooth in the interior.
\end{prop}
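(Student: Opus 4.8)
The plan is a perturbative continuation argument around the reference solution $u$. First, by the standard local well-posedness theory for \eqref{eq: NSalpha} in the subcritical Sobolev space $H^s$ — applicable in each of the three regimes once $\epsilon$ is small enough that $\beta$ is close to $\alpha$, so that $s$ still lies strictly above the scaling exponent $\tfrac52-2\beta$ (and $\beta>\tfrac12$ when $s=2$) — there exist a maximal existence time $T^\ast=T^\ast(v_0,\beta)\in(0,+\infty]$ and a unique $v\in C([0,T^\ast),H^s)\cap L^2_{\mathrm{loc}}([0,T^\ast),H^{s+\beta})$, smooth in the interior by the parabolic smoothing of $e^{-t(-\Delta)^\beta}$, such that $T^\ast<\infty$ implies $\limsup_{t\to T^\ast}\|v(t)\|_{H^s}=+\infty$. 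It therefore suffices to show that $\|v(t)\|_{H^s}$ stays bounded on $[0,\min(T,T^\ast))$, since this forces $T^\ast>T$; uniqueness on $[0,T]$ is then inherited from the local theory (or re-proved by the Grönwall argument below applied to a difference of two solutions).

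To obtain the a priori bound I would work with $w:=v-u$, which is divergence-free and, after eliminating the pressure, solves on $[0,\min(T,T^\ast))$
\begin{equation}\label{eq:plan-diff}
\partial_t w+(-\Delta)^\beta w=-(w\cdot\nabla)w-(w\cdot\nabla)u-(u\cdot\nabla)w-\nabla\pi+\big((-\Delta)^\alpha-(-\Delta)^\beta\big)u\,,\qquad w(\cdot,0)=v_0-u_0\,,
\end{equation}
where $\pi$ is the pressure difference. Testing \eqref{eq:plan-diff} with $(\mathrm{Id}+(-\Delta)^s)w$ and using $\div u=\div w=0$ (so that $\nabla\pi$, the transport term $(u\cdot\nabla)w$ and the $L^2$-part of the self-interaction drop out, and only commutators of $(-\Delta)^{s/2}$ against $u\cdot\nabla$ and $w\cdot\nabla$ survive), one arrives at a differential inequality of the schematic form
\begin{equation}\label{eq:plan-ineq}
\frac12\frac{\mathrm d}{\mathrm dt}\|w\|_{H^s}^2+c_\ast\|w\|_{H^{s+\beta}}^2\ \le\ (\text{three trilinear terms in }u\text{ and }w)\ +\ \|F\|_{H^{s-\beta}}\,\|w\|_{H^{s+\beta}}\,,\qquad F:=\big((-\Delta)^\alpha-(-\Delta)^\beta\big)u\,.
\end{equation}
The forcing is controlled by duality against the dissipation together with the elementary bound $\big|\,|\xi|^{2\alpha}-|\xi|^{2\beta}\big|\le C(\delta)\,|\alpha-\beta|\,\langle\xi\rangle^{\delta}(|\xi|^{2\alpha}+|\xi|^{2\beta})$ (from the mean value theorem after a dyadic split and absorbing the logarithm), which yields $\|F(\cdot,t)\|_{H^{s-\beta}}\le C(\delta)\,|\alpha-\beta|\,(\|u(\cdot,t)\|_{H^{s+\alpha+\delta}}+\|u(\cdot,t)\|_{H^{s+1}})$ — this is exactly where the extra hypothesis $u\in L^2([0,T],H^{s+\alpha+\delta}\cap H^{s+1})$ enters. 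The three nonlinear terms in \eqref{eq:plan-ineq} are estimated by fractional Leibniz, Gagliardo–Nirenberg interpolation, the Sobolev embeddings (uniform constant $\bar C$), and, in the regime $s\le1$ where $W^{1,\infty}$-bounds are unavailable, the fractional commutator estimate \eqref{eqn:commuta} (constant $\bar D(s)$) with a suitable split $s=s_1+s_2$; the point is to choose all Hölder and interpolation exponents so that each factor carrying $u$ is measured in one of the available norms $C([0,T],H^s)$, $L^2([0,T],H^{s+\alpha})$, $L^2([0,T],H^{s+1})$, while each factor carrying $w$ is measured by $\|w\|_{H^s}$ or by $\|w\|_{H^{s+\beta}}$ to a power strictly below $2$. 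After Young's inequality and reabsorbing $\tfrac12 c_\ast\|w\|_{H^{s+\beta}}^2$ on the left this produces, on $[0,\min(T,T^\ast))$,
\begin{equation}\label{eq:plan-ric}
\frac{\mathrm d}{\mathrm dt}\|w\|_{H^s}^2\ \le\ g(t)\,\|w\|_{H^s}^2\ +\ C\,\|w\|_{H^s}^{p_0}\ +\ C(\delta)\,|\alpha-\beta|^2\big(\|u\|_{H^{s+\alpha+\delta}}^2+\|u\|_{H^{s+1}}^2\big)\,,
\end{equation}
with some $g\in L^1([0,T])$ (collecting the $u$-coefficients, square-integrable in time by the hypotheses on $u$) and some $p_0>2$ (the self-interaction $(w\cdot\nabla)w$ carries no $u$ and hence only contributes a superlinear power of $\|w\|_{H^s}$).

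Finally I would close \eqref{eq:plan-ric} by a continuity (bootstrap) argument: fixing $\rho:=1$ and letting $T_1:=\sup\{t<\min(T,T^\ast):\|w\|_{H^s}\le\rho\text{ on }[0,t]\}$, on $[0,T_1]$ the term $C\|w\|_{H^s}^{p_0}\le C\rho^{p_0-2}\|w\|_{H^s}^2$ is linear, so Grönwall gives
\begin{equation}\label{eq:plan-gron}
\sup_{[0,T_1]}\|w\|_{H^s}^2\ \le\ \Big(\|v_0-u_0\|_{H^s}^2+C(\delta)\,|\alpha-\beta|^2\!\int_0^T\!\big(\|u\|_{H^{s+\alpha+\delta}}^2+\|u\|_{H^{s+1}}^2\big)\,\mathrm dt\Big)\exp\!\big(C\rho^{p_0-2}T+\|g\|_{L^1([0,T])}\big)\,.
\end{equation}
Since the $u$-dependent quantities and $T$ on the right are finite (and, in the setting of Theorem~\ref{thm:main1}, bounded in terms of $M$ and $\delta$), one chooses $\epsilon=\epsilon(M,\delta)$ — explicitly, through the inequality \eqref{eq: epsilonexplicit 54 ugly} extracted from \eqref{eq:plan-gron}, using $|\alpha-\beta|^2\le|\alpha-\beta|^\delta$ — so small that \eqref{eqn:eps-choice} forces the right-hand side of \eqref{eq:plan-gron} to be strictly less than $\rho^2/2$ for every $t\le T_1$. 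By continuity this gives $T_1=\min(T,T^\ast)$, hence $\|v\|_{H^s}\le\|u\|_{C([0,T],H^s)}+\rho$ on $[0,\min(T,T^\ast))$, and the blow-up alternative yields $T^\ast>T$, completing the construction of $v$ on $[0,T]$ with the stated regularity and uniqueness.

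I expect the main obstacle to be the nonlinear bookkeeping leading to \eqref{eq:plan-ric}, especially in the low-regularity regime $s\le1$: lacking any $L^\infty$-type control on $u$ and $w$, one must deploy \eqref{eqn:commuta} with finely tuned fractional splits and interpolate so that the $w$-factors always appear with a strictly subquadratic power of the dissipation norm $\|w\|_{H^{s+\beta}}$ while the $u$-factors stay square-integrable in time — which is precisely what the unusual ranges of $s$ and the assumption $u\in L^2H^{s+\alpha+\delta}\cap L^2H^{s+1}$ are tailored for — all the while tracking the $\delta$-dependence of every constant, since this is what makes $\epsilon$ explicit in Theorem~\ref{thm:main1}.
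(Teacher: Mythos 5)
Your plan follows essentially the same route as the paper: local well-posedness in $H^s$ with a blow-up criterion, an $H^s$ energy estimate for the difference $w=v-u$ using the commutator inequality \eqref{eqn:commuta}, a quantitative bound on the operator discrepancy $[(-\Delta)^\alpha-(-\Delta)^\beta]u$, and a superlinear Gr\"onwall closed by a continuity argument (the paper closes instead by an explicit ODE comparison for $f'\le C_0f^\gamma+\dots$, which is equivalent and is what makes $\epsilon$ explicit in \eqref{eq: localchoiceofepsilon}).

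One stated inequality is false as written: the pointwise symbol bound $\bigl||\xi|^{2\alpha}-|\xi|^{2\beta}\bigr|\le C(\delta)\,|\alpha-\beta|\,\langle\xi\rangle^{\delta}(|\xi|^{2\alpha}+|\xi|^{2\beta})$ fails at low frequencies: for fixed $\alpha>\beta$ and $|\xi|\to0$ the left-hand side behaves like $|\xi|^{2\beta}\bigl(1-e^{2(\alpha-\beta)\ln|\xi|}\bigr)\to|\xi|^{2\beta}$, which is not $O(|\alpha-\beta|)\,|\xi|^{2\beta}$. The first power of $|\alpha-\beta|$ is simply not attainable there with only a $\langle\xi\rangle^{\delta}$ loss; the correct trade-off (Lemma~\ref{lem: fraclapRn} in the paper) yields the weaker factor $|\alpha-\beta|^{\delta}$ at the price of a $|\xi|^{-\delta}$ loss at low frequency, which is why the hypothesis $\beta\ge\delta/2$ appears there and why the smallness condition \eqref{eqn:eps-choice} is phrased in terms of $|\alpha-\beta|^{\delta}$ rather than $|\alpha-\beta|^{2}$. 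This does not break your argument --- replacing $|\alpha-\beta|^2$ by $|\alpha-\beta|^{\delta}$ in \eqref{eq:plan-ric} and \eqref{eq:plan-gron} is exactly what the hypothesis controls --- but the constant-tracking you aim for requires the corrected version. Beyond that, the trilinear bookkeeping you defer (choice of $s_1,s_2,p_1,p_2$ in \eqref{eqn:commuta} so that every $w$-factor is subquadratic in $\|w\|_{H^{s+\beta}}$ and every $u$-factor lands in $L^2_tH^{s+\alpha+\delta}$ or $L^2_tH^{s+1}$) is precisely the content of Lemmas~\ref{lem: energyineq for diffeq H1} and~\ref{lem: energyineq for diffeq Hk}, and is where the restrictions $s>\tfrac52-2\alpha$ and \eqref{eq: hypdiffeqHs} are actually used.
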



\subsection{An energy inequality for the difference equation} The stability argument employs an estimate of the difference of two fractional Laplacians of order $\alpha$ and $\beta$ respectively in terms of $\lvert \alpha-\beta  \rvert$. This estimate also drives the additional regularity assumption $u_0 \in H^{\delta}$ for $\delta>0$.
\begin{lemma}\label{lem: fraclapRn} Let $s\geq 0$, $\delta \in \mathopen(0,1 \mathclose]$, $\frac{\delta}{2} \leq \beta  \leq \alpha$, and $u \in H^{s+2\alpha+\delta}(\mathbb{R}^n)$. Then we can estimate
\begin{equation} \label{boundeasy fraclapRn}
\norm{[(-\Delta)^{\alpha}-(-\Delta)^{\beta}]u}_{H^s} \leq C (\alpha-\beta)^\delta \norm{u}_{H^{s+2\alpha+\delta}} \,,
\end{equation}
where $C>0$ is a universal constant independent of $s$, $\delta$, $\alpha$ and $\beta$.
\end{lemma}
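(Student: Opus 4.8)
The plan is to pass to the Fourier side and reduce everything to a pointwise estimate on the multiplier. Since $(-\Delta)^\gamma$ is the Fourier multiplier $\abs{\xi}^{2\gamma}$, Plancherel gives
\[
\norm{[(-\Delta)^\alpha - (-\Delta)^\beta]u}_{H^s}^2 = \int_{\mathbb{R}^n} (1+\abs{\xi}^2)^s \bigl(\abs{\xi}^{2\alpha}-\abs{\xi}^{2\beta}\bigr)^2 \abs{\hat u(\xi)}^2\,\mathrm{d}\xi ,
\]
so it suffices to prove the symbol bound
\[
\bigl\lvert \abs{\xi}^{2\alpha}-\abs{\xi}^{2\beta}\bigr\rvert \le C\,(\alpha-\beta)^\delta\,(1+\abs{\xi}^2)^{\frac{2\alpha+\delta}{2}} \qquad \text{for every } \xi\in\mathbb{R}^n
\]
with $C$ an absolute constant; squaring, multiplying by $(1+\abs{\xi}^2)^s\abs{\hat u}^2$ and integrating then yields $C^2(\alpha-\beta)^{2\delta}\norm{u}_{H^{s+2\alpha+\delta}}^2$, which is \eqref{boundeasy fraclapRn}. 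In particular $s$ enters only through this reduction and plays no further role.

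For the symbol bound I would set $r=\abs{\xi}$ and split into the regimes $r\le 1$ and $r\ge 1$, the basic tool being the elementary inequality $0\le 1-e^{-x}\le \min\{1,x\}\le x^\delta$ valid for $x\ge 0$ and $\delta\in(0,1]$. When $r\le 1$ we have $r^{2\beta}\ge r^{2\alpha}$, so $\abs{r^{2\alpha}-r^{2\beta}} = r^{2\beta}\bigl(1-e^{-2(\alpha-\beta)\abs{\log r}}\bigr) \le \bigl(2(\alpha-\beta)\bigr)^\delta\,r^{2\beta}\abs{\log r}^\delta$; since $\beta\ge\delta/2$ forces $r^{2\beta}\le r^\delta$ and $r\abs{\log r}\le 1$ on $(0,1]$, the product $r^{2\beta}\abs{\log r}^\delta\le (r\abs{\log r})^\delta\le 1$, and the desired bound follows using $(1+r^2)^{(2\alpha+\delta)/2}\ge 1$. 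When $r\ge 1$ we have $r^{2\alpha}\ge r^{2\beta}$, so $\abs{r^{2\alpha}-r^{2\beta}} = r^{2\alpha}\bigl(1-e^{-2(\alpha-\beta)\log r}\bigr)\le \bigl(2(\alpha-\beta)\bigr)^\delta\,r^{2\alpha}(\log r)^\delta\le 2(\alpha-\beta)^\delta\,r^{2\alpha+\delta}$, using $\log r\le r$, and this is $\le 2(\alpha-\beta)^\delta(1+r^2)^{(2\alpha+\delta)/2}$. Combining the two regimes gives the symbol bound with $C=2$.

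I expect no serious obstacle, but the one point requiring care — and the source of the hypotheses $\delta\le 1$ and $\beta\ge\delta/2$ — is the logarithmic factor $\abs{\log r}$ generated by differentiating $\gamma\mapsto r^{2\gamma}$. Near $r=0$ it is unbounded, and it is precisely the decay $r^{2\beta}$ with $2\beta\ge\delta$ that absorbs it through the bounded quantity $r\abs{\log r}$; near $r=\infty$ the slowly growing $(\log r)^\delta$ is swallowed by the extra $\delta$ powers of $r$ allowed on the right-hand side. The constraint $\delta\le1$ is what makes $\min\{1,x\}\le x^\delta$ (and $2^\delta\le2$) available, keeping $C$ genuinely independent of $s,\delta,\alpha,\beta$, which is exactly what the later bootstrap in Proposition~\ref{prop: locstab H3} requires.
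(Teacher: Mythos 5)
Your proof is correct and follows essentially the same route as the paper: both pass to the Fourier side, split at $\lvert\xi\rvert=1$, and extract the factor $(\alpha-\beta)^\delta$ from $1-e^{-2(\alpha-\beta)\lvert\ln\lvert\xi\rvert\rvert}$, with the hypothesis $\beta\geq\delta/2$ absorbing the logarithm near $\xi=0$ and the extra $\delta$ derivatives absorbing it at infinity. Your single pointwise symbol bound via $1-e^{-x}\leq\min\{1,x\}\leq x^\delta$ is a mild streamlining of the paper's splitting of the exponent $2=(2-2\delta)+2\delta$ followed by interpolation of the resulting homogeneous Sobolev norms, but the underlying mechanism is identical.
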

\begin{proof}
Since the fractional Laplacian commutes with derivatives, it is enough to consider the case $s=0$.  Let $\beta \in [\frac{\delta}{2}, \alpha)$. We write
\begin{equation*}
\norm{[(-\Delta)^{\alpha}-(-\Delta)^{\beta}] u}_{L^2}^2 = \int (\lvert \xi \rvert^{2\alpha}-\lvert \xi \rvert ^{2\beta})^2 \lvert \hat{ u}(\xi) \rvert^2 \, \mathrm{d}\xi = \rm I + II \, ,
\end{equation*}
where we split the integration domain into $\{\lvert\xi\rvert \leq 1\}$ and $\{\lvert \xi\rvert > 1\}$ respectively. Since $1-e^{-x} \leq x \text{ and } 1-\frac{1}{x} \leq \ln x \leq x-1$ for every $x>0$ and since $\left(x_1+x_2\right)^p \leq \max\{2^{p-1}, 1 \} ( x_1^p+x_2^p)$ for $ x_1, x_2>0$ and $p>0$, we have that
\begin{align*}
\rm I &= \int_{\lvert \xi \rvert \leq 1} (\lvert \xi \rvert^{2\beta}- \lvert \xi \rvert^{2\alpha})^{(2-2\delta)} \lvert \xi \rvert^{4\beta\delta}\left(1- e^{- 2(\alpha-\beta)\lvert \ln \lvert \xi \rvert\rvert }\right)^{2\delta} \lvert \hat{u}(\xi)\rvert^2 \, \mathrm{d}\xi \\
&\leq 2^{2\delta} (\alpha-\beta)^{2\delta} \int_{\lvert \xi \rvert \leq 1} (\lvert \xi \rvert^{2\beta}- \lvert \xi \rvert^{2\alpha})^{(2-{2\delta})} \lvert \xi \rvert^{4\beta \delta} \lvert \ln \lvert \xi \rvert \rvert^{2\delta} \lvert \hat{u}(\xi)\rvert^2 \, \mathrm{d}\xi \\
&\leq 2^{2\delta} (\alpha-\beta)^{2\delta} \max \{2^{1-{2\delta}}, 1\} \int_{\lvert \xi \rvert \leq 1} (\lvert \xi \rvert^{2\beta (2-{2\delta})} +\lvert \xi \rvert^{2\alpha (2-{2\delta})}) \lvert \xi \rvert^{4\beta {\delta}} \lvert \xi \rvert^{-{2\delta}} \lvert \hat{u}(\xi)\rvert^2 \, \mathrm{d}\xi \\
&\leq \max \{2, 2^{2\delta} \} (\alpha-\beta)^{2\delta} \int_{\lvert \xi \rvert \leq 1} (\lvert \xi \rvert^{4\beta-{2\delta}} +\lvert \xi \rvert^{4\alpha-{2\delta}(1+2(\alpha-\beta))}) \lvert \hat{u}(\xi)\rvert^2 \, \mathrm{d}x \, .
\intertext{Similarly, we estimate}
\rm II
&= \int_{\lvert \xi \rvert >1} (\lvert \xi \rvert^{2\alpha}-\lvert \xi \rvert^{2\beta})^{(2-{2\delta})} \lvert \xi \rvert^{4\alpha\delta} \left(1-e^{-2(\alpha-\beta)\ln\lvert\xi\rvert}\right)^{2\delta} \lvert \hat{u}(\xi)\rvert^2 \mathrm{d}\xi \\
&\leq 2^{2\delta} (\alpha-\beta)^{2\delta} \int_{\lvert \xi \rvert >1} (\lvert \xi \rvert^{2\alpha}-\lvert \xi \rvert^{2\beta})^{(2-{2\delta})} \lvert\xi\rvert^{4\alpha \delta} (\ln\lvert \xi\rvert)^{2\delta} \lvert \hat{u}(\xi)\rvert^2\mathrm{d}\xi\\
&\leq 2 (\alpha-\beta)^{2\delta} \int_{\lvert \xi \rvert >1} \lvert\xi\rvert^{4\alpha} \lvert \xi \rvert^{2\delta}  \lvert \hat{u}(\xi)\rvert^2\mathrm{d}\xi \,.
\end{align*}
Collecting terms, we have obtained
\begin{equation*}
\norm{[(-\Delta)^{\alpha}-(-\Delta)^{\beta}]u}_{L^2}^2 \leq (\alpha-\beta)^{2\delta} \left( 2\norm{u}_{\dot{H}^{2\alpha+\delta}} + \max \{2, 2^{2\delta} \}( \norm{u}_{\dot{H}^{2\beta-\delta}}^2 + \norm{u}_{\dot{H}^{2\alpha- {2\delta}(\alpha-\beta)-\delta}}^2) \right) \, . 
\end{equation*}
We conclude by observing that by the interpolation $\norm{u}_{\dot{H}^{2\beta-\delta}}, \norm{u}_{\dot{H}^{2\alpha-{2\delta}(\alpha-\beta)-\delta}} \leq \norm{u}_{H^{2\alpha + \delta}}\,.$
\end{proof}

\begin{lemma}\label{lem: energyineq for diffeq H1} Let $ \frac{3}{4} < \alpha \leq \frac{5}{4} $ and $\frac{5}{2}-2\alpha < s \leq 1$. Let $u_0 \in H^s$ divergence-free. Consider $u \in C(\left[0,T\right], H^s) \cap L^2([0,T], H^{s+\alpha})$, a smooth solution to the Navier-Stokes equation of order $\alpha$ starting from $u_0$. We additionally assume that
\begin{equation}\label{eq: addintdiffeq}
u \in L^2(\left[0, T\right], H^{s+\alpha+\delta}) \text{ and } Du \in L^2([0,T], H^s)  
\end{equation}
for some positive $\delta \in (0, 1]$.  Then, for any $\frac{3}{4} < \beta \leq \frac{5}{4}$ such that 
\begin{equation}\label{eq: hypdiffeqHs}
 \lvert \alpha-\beta \rvert \leq \frac{1}{2} \min  \{ \delta, \frac 12 (s-(\frac{5}{2}-2\alpha)) \}
 \end{equation}
and any smooth solution $v \in C([0,T], H^s) \cap L^2([0,T], H^{s+\beta})$ of the Navier-Stokes equation of order $\beta$, there holds with $f(t)=\norm{(v-u)(t)}_{H^s}^2$ for every $t\in [0,T]$ that
\begin{equation}\label{eq: diffineq}
f(t) \leq f(0) + \int_0^t C_0 f^\gamma(\tau) + C_1 \left( \norm{D u(\tau)}_{H^s}^2 + \norm{u(\tau)}_{H^{s+\alpha+\delta}}^2 \right) f(\tau) \, \mathrm{d}\tau + C_2 \lvert \alpha-\beta \rvert^{\delta} \int_0^t \norm{u(\tau)}_{H^{s+\alpha+\delta}}^2 \, \mathrm{d}\tau \, ,
\end{equation}
where $\gamma=\gamma(s, \beta)=\frac{6\beta-5+2s}{4\beta-5+2s}$, $C_0=C_0(s,\alpha)>0$, $C_1=C_1(s)>0$ and $C_2>0$ are universal. Moreover, the dependence of $C_0$ on $\alpha$ and $s$ is through $
C_0 \leq 2 (12(1+ \bar D) \bar C^2)^{\frac{5}{s-(\frac{5}{2}-2\alpha)}} \, .$
\end{lemma}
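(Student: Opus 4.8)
We work with the difference $w:=v-u$, which is divergence-free, smooth in the interior, and, after subtracting the two equations and using the identities
\[
(v\cdot\nabla)v-(u\cdot\nabla)u=(w\cdot\nabla)w+(w\cdot\nabla)u+(u\cdot\nabla)w,\qquad
(-\Delta)^\alpha u-(-\Delta)^\beta v=-(-\Delta)^\beta w+\big[(-\Delta)^\alpha-(-\Delta)^\beta\big]u,
\]
solves
\[
\partial_t w+(-\Delta)^\beta w+(w\cdot\nabla)w+(w\cdot\nabla)u+(u\cdot\nabla)w+\nabla(q-p)=\big[(-\Delta)^\alpha-(-\Delta)^\beta\big]u .
\]
Since $u$ and $v$ are smooth with Sobolev decay, we may apply $(-\Delta)^{s/2}$ and pair with $(-\Delta)^{s/2}w$ in $L^2$, and also pair the equation directly with $w$ to treat the $L^2$ part of the $H^s$ norm; the pressure gradient drops because $\div w=0$, while the dissipation yields $-\norm{w}_{\dot{H}^{s+\beta}}^2$ (plus a nonnegative term on the left that we discard). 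With $f(t)=\norm{w(t)}_{H^s}^2$ this gives the differential inequality
\[
\tfrac12 f'(t)+\norm{w(t)}_{\dot{H}^{s+\beta}}^2\le R_1+R_2+R_3+R_4 ,
\]
where $R_1,R_2,R_3$ are the absolute values of the contributions of $(w\cdot\nabla)w$, $(w\cdot\nabla)u$, $(u\cdot\nabla)w$ to the $H^s$ energy balance and $R_4$ that of the fractional Laplacian remainder. The plan is to dominate each $R_i$ by $\tfrac18\norm{w}_{\dot{H}^{s+\beta}}^2$, absorbed on the left, plus its contribution to the right-hand side of \eqref{eq: diffineq}, and then to integrate from $0$ to $t$.

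For $R_1$, which is cubic in $w$, we split $(-\Delta)^{s/2}\big((w\cdot\nabla)w\big)$ by the fractional Leibniz inequality \eqref{eqn:commuta}: the genuine transport term $\int (w\cdot\nabla)(-\Delta)^{s/2}w\cdot(-\Delta)^{s/2}w$ vanishes by incompressibility, and the two remaining terms, after H\"older, the Sobolev embedding $\dot{H}^\sigma\hookrightarrow L^{6/(3-2\sigma)}$, and interpolation of the resulting homogeneous norms of $w$ between $\dot{H}^s$ and $\dot{H}^{s+\beta}$, are bounded by $C\,\norm{w}_{\dot{H}^s}^{3-M}\norm{w}_{\dot{H}^{s+\beta}}^{M}$ with $M=\tfrac{5-2s}{2\beta}$. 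The hypothesis $s>\tfrac52-2\alpha$ together with \eqref{eq: hypdiffeqHs} forces $M<2$, equivalently $4\beta-5+2s>0$, so Young's inequality absorbs $\tfrac18\norm{w}_{\dot{H}^{s+\beta}}^2$ and leaves $C_0 f^\gamma$ with $\gamma=\tfrac{3-M}{2-M}=\tfrac{6\beta-5+2s}{4\beta-5+2s}$, as stated; since the constants from \eqref{eqn:commuta} and from Sobolev are at most $\bar D(s)$ and $\bar C$ while the conjugate Young exponent is $\le\mathrm{const}/\big(s-(\tfrac52-2\alpha)\big)$, one recovers the claimed bound on $C_0$. The cross terms $R_2,R_3$, which carry only one top-order factor of $w$, are treated by the same machinery (again \eqref{eqn:commuta}, Sobolev, interpolation against the dissipation, and Young), distributing the derivatives so that the factor carrying $u$ is controlled either by $\norm{Du}_{H^s}$ (that is, $u\in H^{s+1}$, via \eqref{eq: addintdiffeq}) or by $\norm{u}_{H^{s+\alpha+\delta}}$ (using $\norm{u}_{\dot{H}^\sigma}\le\norm{u}_{H^{s+\alpha+\delta}}$ for $0\le\sigma\le s+\alpha+\delta$); this leaves $C_1\big(\norm{Du}_{H^s}^2+\norm{u}_{H^{s+\alpha+\delta}}^2\big)f$ and needs a short case analysis on the ranges of $s$ and $\beta$. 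For $R_4$, transferring $\beta$ derivatives onto $w$ gives $R_4\le\tfrac18\norm{w}_{\dot{H}^{s+\beta}}^2+C\norm{[(-\Delta)^\alpha-(-\Delta)^\beta]u}_{\dot{H}^{s-\beta}}^2$, and Lemma~\ref{lem: fraclapRn}, applied via the same Fourier-side computation at the Sobolev order $s-\beta$ (possibly negative) with $\delta_0:=\delta-\abs{\alpha-\beta}\in(0,1]$ (admissible by \eqref{eq: hypdiffeqHs}), bounds the last term by $C\abs{\alpha-\beta}^{2\delta_0}\norm{u}_{H^{s+\alpha+\delta}}^2$; since $x^{-2x}\le e^{2/e}$ for $x\in(0,1]$, we have $\abs{\alpha-\beta}^{2\delta_0}=\abs{\alpha-\beta}^{2\delta}\abs{\alpha-\beta}^{-2\abs{\alpha-\beta}}\le e^{2/e}\abs{\alpha-\beta}^{\delta}$, which gives the last term of \eqref{eq: diffineq} with a universal $C_2$. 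The $L^2$ part of the estimate is entirely analogous and only reinforces these terms.

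The main obstacle is precisely the exponent bookkeeping in $R_1,R_2,R_3$: the dissipation $\norm{w}_{\dot{H}^{s+\beta}}$ is weak (supercritical in spirit), so arranging the H\"older/Sobolev/interpolation chain so that every factor falls into one of the norms at our disposal ($\dot{H}^{s+\beta}$, $H^s$, $\norm{Du}_{H^s}$, $\norm{u}_{H^{s+\alpha+\delta}}$) forces a case analysis in $s$ and $\beta$ and uses $s>\tfrac52-2\alpha$ in an essential way; it is exactly this constraint that keeps $\gamma$ finite and positive, and its failure is what rules out a linear Gr\"onwall bound and forces the superlinear term $C_0 f^\gamma$. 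A secondary issue is that all constants must come out uniform in $\alpha$, which is why they are tracked through $\bar C$ and $\bar D(s)$, finite away from the endpoint $\tfrac32$. Integrating the differential inequality from $0$ to $t$ then yields \eqref{eq: diffineq}.
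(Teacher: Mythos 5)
Your proposal follows essentially the same route as the paper: energy estimates on the difference equation for $w=v-u$ at levels $L^2$ and $\dot H^s$, the fractional Leibniz/commutator inequality \eqref{eqn:commuta} combined with Sobolev embedding and interpolation between $\dot H^s$ and $\dot H^{s+\beta}$, Young's inequality to absorb the dissipation and produce the superlinear term $C_0 f^\gamma$ with the same exponent $M=\tfrac{5-2s}{2\beta}$ and $\gamma=\tfrac{3-M}{2-M}$, and Lemma~\ref{lem: fraclapRn} for the fractional-Laplacian remainder. The only (harmless) variation is in the dissipative term, where you place the full $\beta$ derivatives on $w$ and rerun the multiplier computation at order $s-\beta$ with a reduced exponent $\delta_0$, whereas the paper splits the derivatives symmetrically as $\beta/2$ on each factor; both yield the stated bound with a universal $C_2$.
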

\begin{proof}
Set $w:=v-u$, $w_0:= v_0-u_0$ and call $p$ the difference of the pressure terms. Assume for now that $\beta \leq \alpha$ (the other case is handled analogously). By hypothesis, the difference $w \in C(\left[0,T\right], H^s)\cap L^2([0,T], H^{s+\beta})$ is divergence-free and solves the equation
\begin{equation}\label{eq: diffeq}
\partial_t w + (w\cdot \nabla)w+(u\cdot\nabla)w+(w\cdot\nabla)u+ \nabla p=-(-\Delta)^{\beta} w+ \left[(-\Delta)^{\alpha}-(-\Delta)^{\beta}\right]u 
\end{equation}
with initial datum $w(\cdot, 0)=w_0$.
We multiply the equation by $w\psi_R$ for a cut-off $\psi_R \in C^\infty_c(\mathbb{R}^3)$ such that $\psi_R \equiv 1$ on $B_R(0)$, $0\leq \psi_R \leq 1$ and $\lvert \nabla \psi_R \rvert \leq \frac{C}{R}$. By incompressibility, we obtain
\begin{align*}
\frac{1}{2} \int &\lvert w \rvert^2(x,t) \psi_R \, \mathrm{d}x + \int_0^t \int  (-\Delta)^{\beta} w \cdot w \psi_R  \, \mathrm{d}x \, \mathrm{d}\tau \leq 
\frac{1}{2} \int \lvert w_0 \rvert^2 \psi_R \, \mathrm{d}x + \left \lvert \int_0^t \int (w \cdot \nabla) u \cdot w \psi_R \, \mathrm{d}x \, \mathrm{d}\tau \right \rvert \\ &+ \left \lvert \int_0^t \int [(-\Delta)^\alpha-(-\Delta)^\beta] u \cdot w \, \psi_R \, \mathrm{d}x \, \mathrm{d}\tau \right \rvert + \left \lvert \int_0^t \int \left( w\left( \frac{\lvert w \rvert^2}{2} +p \right) + u \frac{\lvert u \rvert^2}{2} \right) \cdot \nabla \psi_R \, \mathrm{d}x \, \mathrm{d}\tau \right \rvert \, .
\end{align*}
Since $\lvert w \rvert^3 + \lvert w \rvert \lvert p \rvert + \lvert u \rvert^3 \in L^1([0, T], L^1)$ by Sobolev embeddings and Calderon-Zygmund estimates, we deduce that in the limit $R \to \infty$ the third line is negligible.
Passing to the limit $R \to \infty$, we have for $t\in [0,T]$ that
\begin{align*}
\frac{1}{2} \int \lvert w \rvert^2(x,t) \, \mathrm{d}x + \int_0^t \int \lvert (-\Delta)^{\beta/2} w \rvert^2 \, \mathrm{d}x \, \mathrm{d}\tau &\leq 
\frac{1}{2} \int \lvert w_0 \rvert^2 \, \mathrm{d}x + \left \lvert \int_0^t \int (w \cdot \nabla) u \cdot w \, \mathrm{d}x \, \mathrm{d}\tau \right \rvert \\ &+ \left \lvert \int_0^t \int [(-\Delta)^\alpha-(-\Delta)^\beta] u \cdot w \, \mathrm{d}x \, \mathrm{d}\tau \right \rvert \, .
\end{align*}
By Gagliardo-Nirenberg-Sobolev inequality and Young, we estimate
\begin{align*}
\left \lvert \int (w \cdot \nabla) u \cdot w \, \mathrm{d}x  \right \rvert &\leq \bar C \norm{w}_{L^\frac{6}{3-2\beta}} \norm{D u}_{L^\frac{6}{2\beta}} \norm{w}_{L^2} \leq \frac{1}{4} \int \lvert (-\Delta)^{\beta/2} w \rvert^2 \, \mathrm{d}x + \bar C^2 \norm{D u}_{L^\frac{6}{2\beta}}^2 \norm{w}_{L^2}^2 \, .
\end{align*}
To bound the last factor, we observe that as long as $\frac{3}{4} \leq \beta \leq \frac{3}{2}$ it holds by Gagliardo-Nirenberg-Sobolev and interpolation that
\begin{equation}\label{eq: est62beta}
\norm{f}_{L^\frac{6}{2\beta}} \leq \bar C \norm{f}_{\dot{H}^{\frac{3}{2}-\beta}} \leq \bar C  \norm{(-\Delta)^{\beta/2} f}_{L^2}^{\frac{3}{2\beta}-1} \norm{f}_{L^2}^{2-\frac{3}{2\beta}} \, .
\end{equation}
By Plancherel, Young and  Lemma \ref{lem: fraclapRn}, we estimate the dissipative term by
\begin{align}
\left \lvert \int [(-\Delta)^\alpha-(-\Delta)^\beta] u \cdot w \, \mathrm{d}x \right \rvert &\leq \frac{1}{4} \int \lvert (-\Delta)^{\beta/2} w \rvert^2 \, \mathrm{d}x +  \int \lvert [(-\Delta)^{\alpha-\beta/2}-(-\Delta)^{\beta/2}] u \rvert^2 \, \mathrm{d}x  \\\label{eqn:alphabeta}
&\leq \frac{1}{4} \int \lvert (-\Delta)^{\beta/2} w \rvert^2 \, \mathrm{d}x + C_2(\alpha-\beta)^\delta \norm{u}_{H^{\alpha+\delta}}^2 \, ,
\end{align}
where $C_2$ is the universal constant from Lemma \ref{lem: fraclapRn}. Reabsorbing in the left-hand side, we have
\begin{equation}\label{eq: diffineq L2}
\frac{\mathrm{d}}{\mathrm{d}t}\norm{ w(t)}_{L^2}^2+ \norm{(-\Delta)^{\beta/2} w(t)}_{L^2}^2  \leq 2 \bar C^4 \norm{ u(t)}_{\dot{H}^{\frac{5}{2}-\beta}}^2 \norm{w(t)}_{L^2}^2 + 2 C_2 (\alpha-\beta)^\delta \norm{u(t)}_{H^{\alpha+\delta}}^2 \, .
\end{equation}
Consider first $s<1$. Since $(\lvert w  \rvert + \lvert u \rvert )\lvert (-\Delta)^{s/2} w \rvert^2 + \lvert (-\Delta)^{s/2} p \rvert \lvert (-\Delta)^{s/2} w \rvert  \in L^1([0,T], L^1)$, we can argue as before to obtain the following energy inequality for the derivative of order $s$ 
\begin{align*}
\frac{1}{2}&\frac{\mathrm{d}}{\mathrm{d}t} \norm{(-\Delta)^{s/2} w(t)}_{L^2}^2 + \norm{(-\Delta)^{(s+\beta)/2} w (t)}_{L^2}^2 
\\& \leq  \left \lvert \int \left[ 
 (-\Delta)^{s/2}\left[ (w \cdot \nabla) w \right] - (w \cdot \nabla) (-\Delta)^{s/2} w 
  \right] \cdot (-\Delta)^{s/2} w \, \mathrm{d}x \right \rvert \\
&+\left \lvert  \int \left[  (-\Delta)^{s/2} \left[ (u \cdot \nabla) w\right] - (u \cdot \nabla) (-\Delta)^{s/2} w
\right] \cdot (-\Delta)^{s/2} w \, \mathrm{d}x  \right \rvert \\
&+ \left \lvert \int \left[ (-\Delta)^{s/2} \left[ (w \cdot \nabla) u\right] - (w \cdot \nabla) (-\Delta)^{s/2} u + (w \cdot \nabla) (-\Delta)^{s/2} u \right] \cdot (-\Delta)^{s/2} w \, \mathrm{d}x \right \rvert \\
&+ \left \lvert \int \left[(-\Delta)^\alpha-(-\Delta)^\beta \right] (-\Delta)^{s/2} u \cdot (-\Delta)^{s/2} w \, \mathrm{d}x \right \rvert  =: \rm I + II +III+ IV \, .
\end{align*}
We estimate line by line. Recall that by interpolation, we can bound as long as $1-\beta \leq s \leq 1$
\begin{equation}\label{eq: estgrad}
\norm{\nabla f}_{L^2} \leq  \norm{(-\Delta)^{(s+\beta)/2} f}_{L^2}^\frac{1-s}{\beta} \norm{(-\Delta)^{s/2}f}_{L^2}^{1-\frac{(1-s)}{\beta}}
\end{equation}
with constant 1. Then the first term is estimated by Gagliardo-Nirenberg-Sobolev, \eqref{eqn:commuta} (with $s_1=s$, $p_1=\frac{6}{2\beta}$, $s_2=0$, $p_2=2$), \eqref{eq: est62beta} and \eqref{eq: estgrad} by
\begin{align*}
 \text{I} &\leq \bar C  \norm{ (-\Delta)^{s/2}\left[ (w \cdot \nabla) w \right] - (w \cdot \nabla) (-\Delta)^{s/2} w 
}_{L^\frac{6}{3+2\beta}} \norm{(-\Delta)^{(s+\beta)/2} w}_{L^2} \\
&\leq \bar C (1+\bar D) \norm{(-\Delta)^{s/2}w}_{L^\frac{6}{2\beta}} \norm{\nabla w}_{L^2}  \norm{(-\Delta)^{(s+\beta)/2} w}_{L^2} \\
 &\leq \bar C^2 (1+\bar D) \norm{(-\Delta)^{(s+\beta)/2}w}_{L^2}^{\frac{5-2s}{2\beta}} \norm{(-\Delta)^{s/2} w}_{L^2}^\frac{6\beta-5+2s}{2\beta} \, .
\end{align*}
The hypothesis  \eqref{eq: hypdiffeqHs} guarantees that $s > \frac{5}{2}-2\beta$, 
and thus we can use Young with exponents $\frac{4\beta}{5-2s}$ and $\frac{4\beta}{4\beta-5+2s}$ to achieve 
\begin{equation*}
\text{I} \leq \frac{1}{12} \norm{(-\Delta)^{(s+\beta)/2}w}_{L^2}^2 + C_0  \norm{(-\Delta)^{s/2}w}_{L^2}^{2 \gamma} \, , 
\end{equation*}
where we introduced $\gamma= \gamma(s, \beta)$ as in the statement and
\begin{align}
C_0&=C_0(s,\beta):=((1+\bar D) \bar C^2)^\frac{4\beta}{4\beta-5+2s} 
\left( \frac{12(5-2s)}{4\beta}\right)^\frac{5-2s}{4\beta-5+2s}
 \leq (12(1+ \bar D) \bar C^2)^\frac{5}{s-(\frac{5}{2}-2\alpha)}
\, . \label{eq: C0Hs}
\end{align}
The second term is estimated similarly (using now \eqref{eqn:commuta} with $s_1=s$, $p_1=\frac{6}{2(s+\beta)-2}$, $s_2=0$, $p_2=\frac{6}{5-2(s+\beta)}$) by
\begin{align*}
\text{II} &\leq (1+\bar D)  \norm{(-\Delta)^{s/2} u}_{L^\frac{6}{2(s+\beta)-2}} \norm{\nabla w}_{L^\frac{6}{5-2(s+\beta)}} \norm{(-\Delta)^{s/2} w}_{L^2} \\ 
&\leq (1+\bar D) \bar C^2 \norm{u}_{\dot{H}^{\frac{5}{2}-\beta}} \norm{(-\Delta)^{(s+\beta)/2}w}_{L^2} \norm{(-\Delta)^{s/2}w }_{L^2}  \\
&\leq \frac{1}{12} \norm{(-\Delta)^{(s+\beta)/2} w}_{L^2}^2 + 3 (1+\bar D)^2 \bar C^4 \norm{u}_{\dot{H}^{\frac{5}{2}-\beta}}^2 \norm{(-\Delta)^{s/2}w}_{L^2}^2 \, .
\end{align*}
We split the third line $\text{III}=\text{III}.1+\text{III}.2$, where $\text{III}.1$ contains the first two addends and $\text{III}.2$ stands for the last one. $\text{III}.1$ is again estimated  using Gagliardo-Nirenberg-Sobolev, \eqref{eqn:commuta}(with $s_1=s$, $p_1=2$, $s_2=0$, $p_2=\frac{6}{2\beta}$) and \eqref{eq: est62beta} by
\begin{align*}
\text{III}.1&\leq  \bar C^2 (1+\bar D) \norm{u}_{\dot{H}^{\frac{5}{2}-\beta}} \norm{(-\Delta)^{s/2}w}_{L^2} \norm{(-\Delta)^{(s+\beta)/2} w}_{L^2} \\
&\leq \frac{1}{24} \norm{(-\Delta)^{(s+\beta)/2} w}_{L^2} ^2 + 6 \bar C^4 (1+\bar D)^2 \norm{u}_{\dot{H}^{\frac{5}{2}-\beta}}^2\ \norm{(-\Delta)^{s/2}w}_{L^2}^2.
\end{align*}
To estimate $\text{III}.2$, we distinguish two cases. Assume first $s+\beta \geq \frac{3}{2}$. Then  $\norm{f}_{\dot{H}^{\frac{3}{2}-\beta}} \leq \norm{f}_{H^s}$ with constant $1$ and thus
\begin{align*}
\text{III}.2 &= \left \lvert \int (w \cdot \nabla) (-\Delta)^{s/2} u \cdot (-\Delta)^{s/2} w \right \rvert \leq \frac{1}{24} \norm{(-\Delta)^{(s+\beta)/2} w}_{L^2} ^2 + 6\bar{C}^4 \norm{u}_{\dot{H}^{1+s}}^2 \norm{w}_{H^s}^2 \, .
\end{align*}
If now $s+\beta<\frac{3}{2}$, we use that $\norm{f}_{L^{\frac{6}{2(s+\beta)}}} \leq \bar C \norm{f}_{\dot{H}^{\frac{3}{2}-(s+\beta)}}$ to obtain
\begin{align*}
\text{III}.2 &\leq \frac{1}{24} \norm{(-\Delta)^{(s+\beta)/2} w}_{L^2} ^2 + 6 \bar C^6 \norm{ u}_{\dot{H}^{\frac{5}{2}-\beta}}^2 \norm{w}_{H^s}^2 \, .
\end{align*}
Finally, the dissipative term is estimated as before by 
\begin{equation*}
\text{IV} \leq \frac{1}{4} \int \lvert (-\Delta)^{(s+\beta)/2} w \rvert^2 \, \mathrm{d}x + C_2(\alpha-\beta)^\delta \norm{(-\Delta)^{s/2}u}_{H^{\alpha+\delta}}^2  \, .
\end{equation*}
Collecting terms, we have obtained after reabsorption of $\frac{1}{2}\int \lvert (-\Delta)^{(s+\beta)/2} w \rvert^2 \, \mathrm{d}x$ on the left
\begin{align*}
\frac{\mathrm{d}}{\mathrm{d}t} \int &\lvert (-\Delta)^{s/2} w \rvert^2 \, \mathrm{d}x \leq 2 C_0 \norm{(-\Delta)^{s/2} w}_{L^2}^{2 \gamma}  + 2 C_2 (\alpha-\beta)^\delta \norm{(-\Delta)^{s/2} u}_{H^{\alpha+\delta}}^2\\ 
&+6 \left[3(1+\bar D)^2 \bar C^4 + 2
\bar C^6 \right] \norm{u}_{\dot{H}^{\frac{5}{2}-\beta}}^2 \norm{w}_{H^s} ^2
+ 12 
\bar C^4 \norm{u}_{\dot{H}^{1+s}}^2 \norm{w}_{H^s} ^2 \, .
\end{align*}
Under the hypothesis \eqref{eq: hypdiffeqHs}, we can estimate $\norm{u}_{\dot{H}^{\frac{5}{2}-\beta}} \leq \norm{u}_{H^{s+\alpha+\delta}}$ with constant 1
\begin{align}
\frac{\mathrm{d}}{\mathrm{d}t} \norm{ w}_{H^s}^2 + \norm{(-\Delta)^{\beta/2} w}_{H^s}^2  &\leq 2 C_0 \norm{w}_{H^s}^{2 \gamma} + C_1 (\norm{u}_{H^{s+\alpha+\delta}}^2+\norm{Du}_{H^{s}}^2)\norm{w}_{H^s}^2 \nonumber \\
&+ 2 C_2(\alpha-\beta)^\delta \norm{u}_{H^{s+\alpha+\delta}}^2 \, ,
\end{align}
where $C_1:=C_1(s)= 6 \left[3(1+\bar D)^2 \bar C^4 + 2\bar C^6 \right]
$. 
In case $s=1$, the estimates simplify considerably due to the absence of commutators and it is straight-forward to obtain the following energy inequality for the difference equation for $s=1$
\begin{align}\label{eq: diffineq H1}
\frac{\mathrm{d}}{\mathrm{d}t} \norm{w (t)}_{H^1}^2 + \norm{ (-\Delta)^{\beta/2} w(t)}_{H^1}^2  &\leq 2 C_0\norm{ w(t)}_{H^1}^{2\frac{3(2\beta-1)}{4\beta-3} } + 8 \bar C^4 \norm{w(t)}_{H^1}^2 \norm{D u(t)}_{H^1}^2 \nonumber \\
&+ 2C_2 (\alpha-\beta)^\delta  \norm{Du(t)}_{H^{\alpha+ \delta}}^2\, ,
\end{align}
where now
\begin{equation*}\label{eq: C_0}
C_0=C_0(1, \beta):=\left(1-\frac{3}{4\beta} \right) \Big(\frac{6\bar C^\frac{3}{2\beta}}{\beta} \Big)^\frac{4\beta}{4\beta-3}
 \leq (8 \bar C^2)^{\frac{5}{2(\alpha-\frac{3}{4})}}
 \, .\qedhere
\end{equation*}
\end{proof}
\begin{lemma}\label{lem: energyineq for diffeq Hk} Let $ 0 < \alpha < \frac{3}{2} $, $T>0$, and $k\in \mathbb{N}$ such that $k > \frac{5}{2}-2\alpha$. Consider $u \in C(\left[0,T\right], H^k) \cap L^2([0,T], H^{k+\alpha})$, a smooth solution to the fractional Navier-Stokes equations of order $\alpha$ with initial datum $u_0\in H^k$. We additionally assume that
\begin{equation}\label{eq: addintegrab Hk}
u \in L^2(\left[0, T\right], H^{k+\alpha+\delta})\cap L^2([0,T], H^{k+1})
\end{equation}
for some positive $\delta \in (0, \min\{1, 2(k-\frac{5}{2}+2\alpha)\}]$. Then, for any $\frac{\delta}{2} \leq \beta < \frac{3}{2}$ such that 
\begin{equation}\label{eq: smallnessreq Hk}
 \lvert \alpha-\beta \rvert < \frac{\delta}{2}
 \end{equation}
and any solution $v \in C([0,T], H^k) \cap L^2([0,T], H^{k+\beta})$ of the fractional Navier-Stokes equations of order $\beta$, there holds with $f(t)=\norm{(v-u)(t)}_{H^k}^2$ for every $t\in [0,T]$ that
\begin{equation}\label{eq: diff ineq H^k}
f(t) \leq f(0) + C_1 \int_0^t f^2(\tau) + \norm{Du(\tau)}_{H^k}^2   f(\tau) \, \mathrm{d}\tau + C_2 \lvert \alpha-\beta \rvert^{\delta} \int_0^T \norm{u(\tau)}_{H^{k+\alpha+\delta}}^2 \, \mathrm{d}\tau 
\end{equation}
for $C_1=C_1(k)>0$ and a universal $C_2>0$. 
\end{lemma}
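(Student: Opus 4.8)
The plan is to follow the blueprint of Lemma~\ref{lem: energyineq for diffeq H1}, exploiting that since $k$ is an integer one may differentiate with the ordinary Leibniz rule in place of the fractional commutator estimate \eqref{eqn:commuta}. I would set $w := v-u$, $w_0 := v_0-u_0$, write $p$ for the difference of the two pressures, assume (as in Lemma~\ref{lem: energyineq for diffeq H1}) that $\beta\le\alpha$, the opposite case being analogous, and note that $w$ is divergence-free and solves \eqref{eq: diffeq} with datum $w_0$. For each multi-index $\gamma$ with $|\gamma|=k$ I would apply $\partial^\gamma$ to \eqref{eq: diffeq}, multiply by $\partial^\gamma w$, localize with a cut-off $\psi_R$ and send $R\to\infty$ to justify the integrations by parts (the boundary contributions vanishing in the limit by Sobolev embedding and Calder\'on--Zygmund estimates, exactly as in Lemma~\ref{lem: energyineq for diffeq H1}). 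Summing over $|\gamma|=k$ and using $\div u=\div w=0$ to discard the top-order transport terms $\sum_{|\gamma|=k}\int(u\cdot\nabla)\partial^\gamma w\cdot\partial^\gamma w=\sum_{|\gamma|=k}\int(w\cdot\nabla)\partial^\gamma w\cdot\partial^\gamma w=0$, this gives for every $t\in[0,T]$
\begin{equation*}
\frac12\frac{\mathrm{d}}{\mathrm{d}t}\norm{w}_{\dot{H}^k}^2+c_k\norm{w}_{\dot{H}^{k+\beta}}^2\le \mathrm{I}+\mathrm{II}+\mathrm{III}+\mathrm{IV},
\end{equation*}
with $c_k>0$ coming from $\sum_{|\gamma|=k}|\xi^\gamma|^2\ge c_k|\xi|^{2k}$; here $\mathrm{I}$ collects the Leibniz remainders of $(w\cdot\nabla)w$ (sums over $0<\mu\le\gamma$ of $\int(\partial^\mu w\cdot\nabla)\partial^{\gamma-\mu}w\cdot\partial^\gamma w$, cubic in $w$), $\mathrm{II}$ those of $(u\cdot\nabla)w$ (sums over $0<\mu\le\gamma$ of $\int(\partial^\mu u\cdot\nabla)\partial^{\gamma-\mu}w\cdot\partial^\gamma w$), $\mathrm{III}$ those of $(w\cdot\nabla)u$ (sums over $\mu\le\gamma$ of $\int(\partial^\mu w\cdot\nabla)\partial^{\gamma-\mu}u\cdot\partial^\gamma w$), and $\mathrm{IV}:=\big|\sum_{|\gamma|=k}\int[(-\Delta)^\alpha-(-\Delta)^\beta]\partial^\gamma u\cdot\partial^\gamma w\big|$.

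The dissipative term I would handle exactly as in \eqref{eqn:alphabeta}, via Lemma~\ref{lem: fraclapRn} and $|\alpha-\beta|^{2\delta}\le|\alpha-\beta|^\delta$, obtaining $\mathrm{IV}\le\frac{c_k}{4}\norm{w}_{\dot{H}^{k+\beta}}^2+C_2|\alpha-\beta|^\delta\norm{u}_{H^{k+\alpha+\delta}}^2$. In $\mathrm{II}$ and $\mathrm{III}$ every summand carries at least one derivative on a factor of $u$; by H\"older, the uniform Gagliardo--Nirenberg--Sobolev inequality (constant $\bar C$), and interpolation, the $u$-prefactor reduces to norms $\norm{u}_{\dot{H}^{5/2-\beta}}$ and $\norm{u}_{\dot{H}^{k+1}}$, which are both controlled by $C(k)\norm{Du}_{H^k}$ because $k$ is a positive integer and the admissible range of $\beta$ forces $1\le\frac52-\beta\le k+1$; a Young step absorbing a further part of the dissipation then gives $\mathrm{II}+\mathrm{III}\le\frac{c_k}{4}\norm{w}_{\dot{H}^{k+\beta}}^2+C_1(k)\norm{Du}_{H^k}^2 f$.

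For the cubic term $\mathrm{I}$ the plan is to estimate each Leibniz remainder in $L^{6/(3+2\beta)}$ and pair it with $\partial^\gamma w$ through the duality $L^{6/(3+2\beta)}\hookrightarrow\dot{H}^{-\beta}$ (so that $\norm{\partial^\gamma w}_{\dot{H}^\beta}\le\norm{w}_{\dot{H}^{k+\beta}}$ supplies one power of the dissipation), distributing the remaining derivatives among $\dot{H}^{k+\beta}$, $\dot{H}^k$ and $L^2$. The constraints $k>\frac52-2\alpha$ and $\delta\le2(k-\frac52+2\alpha)$, together with $|\alpha-\beta|<\frac\delta2$, should guarantee that all Sobolev exponents are admissible and that the Young exponents can be chosen so that, after absorbing a further part of the dissipation, the residual power of $f=\norm{w}_{L^2}^2+\norm{w}_{\dot{H}^k}^2$ is at most quadratic; that is, $\mathrm{I}\le\frac{c_k}{4}\norm{w}_{\dot{H}^{k+\beta}}^2+C_1(k)f^2$.

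Collecting the four estimates, reabsorbing $c_k\norm{w}_{\dot{H}^{k+\beta}}^2$ on the left and discarding it, one reaches
\begin{equation*}
\frac{\mathrm{d}}{\mathrm{d}t}\norm{w(t)}_{H^k}^2\le C_1\big(f^2(t)+\norm{Du(t)}_{H^k}^2 f(t)\big)+C_2|\alpha-\beta|^\delta\norm{u(t)}_{H^{k+\alpha+\delta}}^2,
\end{equation*}
with $C_1=C_1(k)$ and $C_2$ universal, and integrating in $t$ while bounding $\int_0^t\le\int_0^T$ in the last term yields \eqref{eq: diff ineq H^k}. I expect the main obstacle to be the bookkeeping in $\mathrm{I}$: one has to check that, for every splitting of the Leibniz sum, the pair of Sobolev/interpolation exponents stays in range and that enough dissipation is left over to absorb the highest-order factor while keeping the surviving power of $f$ no larger than $2$ --- this is precisely the role played by $\delta\le2(k-\frac52+2\alpha)$, the $H^k$-analogue of \eqref{eq: hypdiffeqHs}.
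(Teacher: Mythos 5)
Your proposal follows essentially the same route as the paper's own proof: ordinary Leibniz differentiation of integer order, incompressibility to discard the top-order transport terms, the $L^{6/(3+2\beta)}$--$L^{6/(3-2\beta)}$ duality with the dissipation followed by Young and reabsorption, and Lemma~\ref{lem: fraclapRn} for the $[(-\Delta)^\alpha-(-\Delta)^\beta]u$ term; even the one point you leave unverified (that the cubic Leibniz remainders cost no more than $f^2$) is exactly the step the paper dispatches with ``it is straightforward to check''. The only adjustment needed is to run the estimate for every derivative order $\kappa=0,\dots,k$ and sum, as the paper does, so that the left-hand side controls the full $H^k$ norm entering $f$ and not merely the homogeneous seminorm $\norm{w}_{\dot{H}^k}$.
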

\begin{proof}
For $\kappa=0, \dots, k$, we can differentiate the difference equation by $D^\kappa$ (by which we denote any derivative of order $\kappa$) and multiply it by $D^\kappa w$. Since $w$ is incompressible, we obtain
\begin{align*}
\frac{1}{2}\frac{\mathrm{d}}{\mathrm{d}t} \lvert D^\kappa w \rvert^2 &+ \div\left( \frac{\lvert D^\kappa w \rvert^2}{2} w\right) + \sum_{j=1}^\kappa D^j w \star D^{\kappa+1-j} w \cdot D^\kappa w + \div \left( \frac{\lvert D^\kappa w \rvert^2}{2} u\right) \\ 
&+ \sum_{j=1}^{\kappa+1} D^j u \star D^{\kappa+1-j} w \cdot D^\kappa w = - (-\Delta)^\beta D^\kappa w \cdot D^\kappa w + [(-\Delta)^\alpha- (-\Delta)^\beta] D^\kappa u \cdot D^\kappa w \, ,
\end{align*}
where we denote by $\star$ any bilinear expression with constant coefficients. We then test the latter equality with a cut-off $\psi_R$ as in the proof of Lemma \ref{lem: energyineq for diffeq H1} and we note that since $\lvert D^\kappa w \rvert^2 w, \lvert D^\kappa w \rvert^2 u \in L^1([0,T], L^1)$, the contributions of the divergence term vanish in the limit $R\to \infty$. Thus, 
\begin{align*}
\frac{1}{2}& \frac{\mathrm{d}}{\mathrm{d}t} \norm{D^\kappa w(t)}_{L^2}^2 + \norm{(-\Delta)^{\beta/2} D^\kappa w(t)}_{L^2}^2 \leq \sum_{j=1}^\kappa \left \lvert \int D^j w \star D^{k+1-j} w \cdot D^\kappa w \, (x,t) \, \mathrm{d}x \right \rvert \\
&+ \sum_{j=1}^{\kappa +1} \left \lvert \int D^j u \star D^{\kappa+1-j}w  \cdot D^\kappa w \, (x,t) \, \mathrm{d}x \right \rvert + \left \lvert \int [(-\Delta)^\alpha-(-\Delta)^\beta] D^\kappa u \cdot D^\kappa w \, (x,t) \, \mathrm{d}x \right \rvert \, .
\end{align*}
By the Sobolev embeddings, it is straight-forward to check that as long as $k > \frac{5}{2}-2\beta$ it holds
\begin{align*} 
\sum_{j=1}^\kappa \left \lvert \int D^j w \star D^{k+1-j} w \cdot D^\kappa w \, (x,t) \, \mathrm{d}x \right \rvert  &\leq \frac{1}{8}\norm{(-\Delta)^{\beta/2} D^\kappa w(t)}_{L^2}^2 + C \norm{w(t)}_{H^k}^4 \, 
\end{align*}
where $C=C(k, \beta)$
. Notice that $k> \frac{5}{2}-2\beta$ is guaranteed through  \eqref{eq: smallnessreq Hk} and the upper bound on $\delta$.
As for the second line, we have
\begin{align*} 
\sum_{j=1}^{\kappa +1}\int \lvert D^j u \rvert \lvert D^{\kappa+1-j} w \rvert \lvert D^\kappa w \rvert (x,t) \, \mathrm{d}x 
&\leq \bar C \sum_{j=1}^{\kappa+1} \norm{(-\Delta)^{\beta/2} D^\kappa w (t)}_{L^2} \norm{\lvert D^j u \rvert \lvert D^{\kappa+1-j}w\rvert(t)}_{L^\frac{6}{3+2\beta}} \\
&\leq \frac{1}{8} \norm{(-\Delta)^{\beta/2}D^\kappa w(t)}_{L^2}^2 + C \norm{w(t)}_{H^k}^2 \norm{Du(t)}_{H^k}^2 \, .
\end{align*}
We now estimate the final term as before, using Plancherel, H\"older,  Young and Lemma \ref{lem: fraclapRn} as in \eqref{eqn:alphabeta}.
Combining the estimates, reabsorbing the contributions of $\norm{(-\Delta)^{\beta/2} D^\kappa w }_{L^2}^2$ on the left and summing over $\kappa=0, \dots, k$, we obtain
\begin{align*}
\frac{\mathrm{d}}{\mathrm{d}t} \norm{w(t)}_{H^k}^2 + \norm{(-\Delta)^{\beta/2}w(t)}_{H^k}^2 &\leq C_1( \norm{w(t)}_{H^k}^4+ \norm{u(t)}_{H^{k+1}}^2\norm{w(t)}_{H^k}^2) + C_2(\alpha-\beta)^\delta \norm{u(t)}_{H^{k+\alpha+\delta}}^2 \, .
\end{align*}
with $C_1=C_1(k)>0$ and a universal $C_2>0$.
\end{proof}
{
\begin{proof}[Proof of Proposition~\ref{prop: locstab H3}] Consider first the case $\alpha \in (\frac{3}{4}, \frac{5}{4}]$. Let $s\in (\frac{5}{2}-2\alpha,1]$, $v_0 \in H^s$ divergence-free and $\beta \in (0, \frac{5}{4}]$ such that \eqref{eq: addintdiffeq} holds for an $\epsilon \in (0,1]$, yet to be determined. By choosing $\epsilon$ suitably small, we can always assume that $\lvert \alpha-\beta \rvert \leq \frac{1}{2} \min \{\delta, s-(\frac{5}{2}-2\alpha) \}$. Observe that then in particular $s>\frac{5}{2}-2\beta$, where the latter is the critical Sobolev regularity with respect to the natural scaling of \eqref{eq: NSalpha}. Classical arguments allow to build a maximal local solution $v\in C([0,T_{max}), H^s) \cap L^2([0,T_{max}), H^{s+\beta})$ to the Navier-Stokes euqations of order $\beta$ starting from $v_0$ which is smooth in the interior and unique among Leray-Hopf solutions by Proposition \ref{prop: weakstrong uniq}. Moreover, if $T_{max}< + \infty$ we have $\limsup_{t \uparrow T^\ast} \norm{v(t)}_{H^s}=+\infty$.
Let us define $f(t):= \norm{(u-v)(t)}_{H^s}$ for $t \in [0, \min \{T_{max}, T \})$. By Lemma \ref{lem: energyineq for diffeq H1}, $f$ satisfies the differential inequality \eqref{eq: diffineq} and hence, 
for any $t\in [0, \min \{T_{max}, T \})$, we have an upper abound on $\max_{s\in [0,t]} f(s) \,.$
In particular, if
\begin{equation}\label{eq: localsmallnesreq}
(f(0)+C_2 \lvert \alpha-\beta\rvert^\delta \norm{u}_{L^2([0, T], H^{s+\alpha+\delta})}^2)^{\gamma-1} C_0 (\gamma-1) e^{C_1(\gamma-1)(\norm{u}_{L^2([0,T],H^{s+1})}^2 +\norm{u}_{L^2([0,T],H^{s+\alpha+\delta})}^2)}< \frac{1}{2T} \, ,
\end{equation}
where we recall from Lemma \ref{lem: energyineq for diffeq H1} that $\gamma-1=\frac{2\beta}{4\beta-5+2s}$, we have $$\max_{s\in [0,T]} f(s) \leq ((\gamma-1)  C_0 T)^{-\frac{1}{\gamma-1}} \, .$$ We deduce that $\limsup_{t \uparrow T} \norm{(u-v)(t)}_{H^s} <+\infty$ and thus $T_{max}> T$.  The condition \eqref{eq: localsmallnesreq} is thus satisfied, if we require that \eqref{eqn:eps-choice} is enforced with
\begin{align}\label{eq: localchoiceofepsilon}
\epsilon:= \min \{ &\frac{\delta}{2}, \frac{1}{4}(s-(\frac{5}{2}-2\alpha)), \\ 
&\max\{1, C_2 \norm{u}_{L^2([0,T], H^{s+\alpha+\delta})}^2 \} ( C_0 (\gamma-1)T )^{-\frac{1}{\gamma-1}} e^{-C_1 (\norm{u}_{L^2([0,T],H^{s+1})}^2+\norm{u}_{L^2([0,T],H^{s+\alpha+\delta})}^2)}\big \} \, .
\end{align}
Recall that $\gamma$ depends on $\beta$; however, by choosing $\beta$ close enough to $\alpha$, we can bound $\gamma-1$ uniformly away from $0$. This concludes the proof. The cases $\alpha \in (\frac{1}{2}, \frac{5}{4}]$ with $s=2$ and $\alpha \in (0, \frac{5}{4}]$ with $s=3$ follow analogously from Lemma \ref{lem: energyineq for diffeq Hk}. In the latter case, the local existence from $H^3$ initial data follows for instance from \cite[Theorem 3.4]{BM} (the proof there covers the classical Navier-Stokes 
$\alpha=1$ and the Euler equations, and, being based on energy methods, can easily be adapted to the fractional Navier-Stokes equations of order $\alpha>0$).
\end{proof}
}

\section{Leray's estimate on singular times}\label{sec:leray}
In his seminal paper \cite{Leray}, Leray showed that if $u_0 \in H^1$, then the Leray-Hopf solution is unique and smooth for a short time with upper bound $T= C \norm{\nabla u_0}_{L^2}^{-4}$. Thanks to energy inequality, this bound can be iterated to get global existence provided that a Leray-Hopf solution exists and is smooth until a sufficiently large $T^\ast$. With minor modifications, Leray's argument applies to the fractional setting. Notice however, that for an eventual regularization of Leray-Hopf solutions in the ipodissipative range $\alpha \leq 1$, we need an upper bound of the form $T= C \norm{(-\Delta)^{\alpha/2} u_0}_{L^2}^{-\beta}$, for some $\beta>0$, since the energy inequality now only controls $(-\Delta)^{\alpha/2}u$ in $L^2(\mathbb{R}^3 \times [0, +\infty))$. Since $\norm{(-\Delta)^{\alpha/2} u_0 }_{L^2}$ is critical with respect to the natural scaling of \eqref{eq: NSalpha} at $\alpha=\frac{5}{6}$, such an estimate can only be expected in the subcritical range $\alpha> \frac{5}{6}$.

\begin{prop}\label{prop: LeraySingTime} Let $\frac{3}{4} < \alpha \leq \frac{5}{4}$ and $u_0 \in H^1(\mathbb{R}^3)$ divergence-free. Then there exists a universal $C_2=C_2(\alpha)$, uniformly bounded away from $\alpha=\frac{3}{4}$, such that, setting 
\begin{equation} \label{eq: LeraySingTimeDef}
T(\norm{\nabla u_0}_{L^2}, \alpha):= C_2 \norm{\nabla u_0}_{L^2}^{-\frac{4\alpha}{4\alpha-3}} \, ,
\end{equation}
there exists a unique solution $u$ to \eqref{eq: NSalpha} on $(0,T)$ satisfying $u \in L^\infty([0,T), H^1) \cap L^2([0,T),H^{1+\alpha})$ which is smooth in $(0,T)$.
\end{prop}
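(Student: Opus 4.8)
The plan is to run Leray's classical scheme \cite{Leray} in the fractional setting: combine short-time well-posedness in $H^1$ with an a priori bound on $\norm{\nabla u(t)}_{L^2}$ that survives up to the time $T(\norm{\nabla u_0}_{L^2},\alpha)$ of \eqref{eq: LeraySingTimeDef}, and close the loop through the blow-up criterion. First, exactly as in the proof of Proposition~\ref{prop: locstab H3} (whose case $s=1$ is legitimate since $\alpha>\tfrac34$ and is handled by classical energy arguments), one obtains a maximal time $T_{\max}\in(0,+\infty]$ and a solution $u\in C([0,T_{\max}),H^1)\cap L^2_{loc}([0,T_{\max}),H^{1+\alpha})$ of \eqref{eq: NSalpha} with datum $u_0$, smooth in $\mathbb{R}^3\times(0,T_{\max})$, unique among Leray--Hopf solutions by Proposition~\ref{prop: weakstrong uniq} (its hypothesis \eqref{eq: WS} holding after interpolating $u\in L^\infty_tH^1\cap L^2_tH^{1+\alpha}$ into $L^2_t\dot H^s$ for some $s>\tfrac52-\alpha$, which embeds into the space required by \eqref{eq: WS} precisely because $\alpha>\tfrac34$), and obeying the blow-up alternative $\limsup_{t\uparrow T_{\max}}\norm{u(t)}_{H^1}=+\infty$ whenever $T_{\max}<+\infty$.

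The heart of the argument is the a priori $\dot H^1$ estimate. I would test \eqref{eq: NSalpha} with $-\Delta u$, which is legitimate because $u$ is smooth in the interior: the pressure drops by incompressibility, the dissipation yields $\norm{u}_{\dot H^{1+\alpha}}^2$, and one is left with
\begin{equation*}
\tfrac12\tfrac{\mathrm{d}}{\mathrm{d}t}\norm{\nabla u}_{L^2}^2+\norm{u}_{\dot H^{1+\alpha}}^2=\int (u\cdot\nabla)u\cdot\Delta u\,\mathrm{d}x\,.
\end{equation*}
An integration by parts using $\div u=0$, which annihilates $\tfrac12\int(u\cdot\nabla)\abs{\nabla u}^2$, bounds the right-hand side by $C\norm{\nabla u}_{L^3}^3$. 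Next I would use the Sobolev embedding $\dot H^{1/2}(\mathbb{R}^3)\hookrightarrow L^3$ together with the interpolation $\norm{u}_{\dot H^{3/2}}\le\norm{u}_{\dot H^1}^{1-\theta}\norm{u}_{\dot H^{1+\alpha}}^{\theta}$, $\theta=\tfrac1{2\alpha}$, to get $\norm{\nabla u}_{L^3}^3\le\bar C^3\norm{u}_{\dot H^1}^{3(1-\theta)}\norm{u}_{\dot H^{1+\alpha}}^{3\theta}$; since $\alpha>\tfrac34$ forces $3\theta=\tfrac3{2\alpha}<2$, Young's inequality with conjugate exponents $\tfrac{4\alpha}{3}$ and $\tfrac{4\alpha}{4\alpha-3}$ absorbs one half of the dissipation and gives
\begin{equation*}
\tfrac{\mathrm{d}}{\mathrm{d}t}\norm{u(t)}_{\dot H^1}^2+\norm{u(t)}_{\dot H^{1+\alpha}}^2\le A(\alpha)\,\norm{u(t)}_{\dot H^1}^{2q},\qquad q=q(\alpha)=\tfrac{3(2\alpha-1)}{4\alpha-3},
\end{equation*}
where $A(\alpha)$ depends only on $\bar C$ and on these Young exponents, hence stays bounded on compact subsets of $(\tfrac34,\tfrac54]$.

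To conclude I would discard the (nonnegative) dissipation term and compare: $y(t):=\norm{u(t)}_{\dot H^1}^2$ satisfies $y'\le A y^q$ with $q-1=\tfrac{2\alpha}{4\alpha-3}>0$, so integrating $\tfrac{\mathrm{d}}{\mathrm{d}t}y^{1-q}\ge-(q-1)A$ yields $y(t)\le 2^{1/(q-1)}y(0)$ for every $t\le T:=\tfrac1{2(q-1)A(\alpha)}\norm{\nabla u_0}_{L^2}^{-2(q-1)}$, which is exactly $C_2(\alpha)\norm{\nabla u_0}_{L^2}^{-4\alpha/(4\alpha-3)}$ upon setting $C_2(\alpha):=\tfrac1{2(q-1)A(\alpha)}$, a constant uniformly bounded away from $\alpha=\tfrac34$. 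Together with the energy inequality $\norm{u(t)}_{L^2}\le\norm{u_0}_{L^2}$ this keeps $\norm{u(t)}_{H^1}$ bounded on $[0,\min\{T,T_{\max}\})$, so the blow-up alternative forces $T_{\max}\ge T$; integrating the $\dot H^1$ inequality over $[0,T)$ then gives $u\in L^2([0,T),H^{1+\alpha})$, and uniqueness on $(0,T)$ is the one recorded in the first step. This establishes Proposition~\ref{prop: LeraySingTime}.

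The single place where $\alpha>\tfrac34$ is genuinely used — and the step I expect to require the most care — is the Young inequality above: it is exactly the strict bound $3\theta=\tfrac3{2\alpha}<2$ that allows the cubic nonlinearity to be dominated by the $\dot H^{1+\alpha}$-dissipation, and the resulting exponent $\tfrac{4\alpha}{4\alpha-3}$ simultaneously fixes the blow-up rate in $T(\cdot,\alpha)$ and explains why $C_2(\alpha)\to 0$ as $\alpha\downarrow\tfrac34$. Consequently one must verify that the Sobolev constant, the Young constant and the exponent $q$ all remain controlled uniformly on every $[\tfrac34+\eta,\tfrac54]$, which is precisely what the phrase ``uniformly bounded away from $\alpha=\tfrac34$'' records; tracking these constants is the only genuinely bookkeeping-heavy part of the argument.
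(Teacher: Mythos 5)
Your proposal is correct: the analytic heart — the $\dot H^1$ energy estimate in which the cubic term is interpolated between $\dot H^1$ and $\dot H^{1+\alpha}$ and absorbed into the dissipation by Young with exponents $\frac{4\alpha}{3}$ and $\frac{4\alpha}{4\alpha-3}$, leading to $y'\le Ay^{q}$ with $q=\frac{3(2\alpha-1)}{4\alpha-3}$ and hence to the lifespan $C_2\norm{\nabla u_0}_{L^2}^{-4\alpha/(4\alpha-3)}$ — is exactly the computation in the paper, with the same exponents and the same identification of $\alpha>\frac34$ as the condition $3\theta<2$ making Young applicable. The structural packaging differs, though. The paper does not invoke local well-posedness plus a blow-up alternative: it performs the estimate on the globally smooth solutions $u_\eps$ of the mollified system \eqref{eq: regularizedNS}, obtains the uniform bound \eqref{eq: uniform bound on H1} on $[0,T)$, and passes to the limit $\eps\to0$ by weak lower semicontinuity, recovering uniqueness afterwards from Proposition~\ref{prop: weakstrong uniq} exactly as you do. The advantage of the paper's route is that all integrations by parts (in particular the one killing the pressure term, which it justifies via a cutoff $\psi_R$ and Calder\'on--Zygmund bounds on $\nabla p_\eps$) are performed on globally smooth, globally integrable objects; in your version the identity obtained by testing with $-\Delta u$ on a solution that is merely smooth in the open interval and $H^1$ at time $0$ requires a word of justification (integrate from $s>0$ and let $s\to0$, and control the behaviour at spatial infinity of $\nabla p$ and $u$ with a cutoff), which you elide with ``the pressure drops by incompressibility.'' Your route buys a self-contained continuation argument without the limit passage; the paper's buys cleaner rigor at the price of the approximation scheme. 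Your intermediate estimate of the nonlinearity ($\norm{\nabla u}_{L^3}^3$ via $\dot H^{1/2}\hookrightarrow L^3$) also differs cosmetically from the paper's ($\norm{Du}_{L^2}\norm{Du}_{L^4}^2$ via Gagliardo--Nirenberg), but both yield the identical differential inequality, so this is immaterial.
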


\begin{proof}
Notice first that by Sobolev embeddings, we have $(-\Delta)^{(1-\alpha)/2} u \in L^2([0,T), H^{2\alpha}) \hookrightarrow L^2([0,T), L^\infty)$ for $\frac{3}{4}< \alpha<1$, whereas for $\alpha \geq 1$ it holds $u\in L^2([0,T), L^\frac{3}{\alpha-1})$. Thus the uniqueness of the Leray-Hopf solution on $[0,T)$ follows from Proposition \ref{prop: weakstrong uniq}. The smoothness in the interior follows from a standard boot-strap argument. We therefore focus on the Leray-Hopf solution $u$ which is attained as limit of the approximation scheme \eqref{eq: regularizedNS}. We perform all the estimates on the unique, smooth and global solutions $(u_\epsilon, p_\epsilon)$ of \eqref{eq: regularizedNS} and pass to the limit  $\epsilon \to 0$ only at the very end. 
By smoothness we may derive the equation by $\partial_j$ and multiply it by $\partial_j u_\epsilon$. To make the computation rigorous we employ a cutoff $\psi_R \in C^\infty_c(B_{2R}(0))$ and we then let $R\to \infty$; the pressure term can be neglected by Calderon-Zygmund estimates, $\norm{\partial_j(u_\epsilon \ast \varphi_\epsilon)}_{L^2_t L^\infty_x} \leq C \norm{\partial_j u_\epsilon}_{L^2_{t,x}}$ and $\norm{u_\epsilon \ast \varphi}_{L^\infty} \leq C \norm{u_\epsilon}_{L^\infty_t L^2_x}$, which give $\partial_j p_\epsilon \in L^2_{t, x}$.  
We obtain for $t\in [0,+\infty)$
\begin{equation}\label{eq: energyineq H1}
\frac{1}{2} \int \lvert Du_\epsilon \rvert^2 (x,t) \, \mathrm{d}x + \int_0^t \int \lvert (-\Delta)^{\alpha/2} Du_\epsilon \rvert^2 \, \mathrm{d}x \, \mathrm{d}\tau \leq \frac{1}{2} \int \lvert Du_0 \rvert^2 \, \mathrm{d}x+ \int_0^t \int \lvert D(u_\epsilon \ast \varphi_\epsilon) \rvert \lvert Du_\epsilon \rvert^2 \, \mathrm{d}x \, \mathrm{d}\tau \, .
\end{equation}
We estimate the right-hand side for $\frac{3}{4}\leq \alpha < \frac{3}{2}$ using H\"older, Young's convolution inequality and the Gagliardo-Nirenberg-Sobolev inequality by
\begin{align}
\int \lvert D(u_\epsilon \ast \varphi_\epsilon) \rvert \lvert Du_\epsilon \rvert^2 \, \mathrm{d}x  &\leq \norm{Du_\epsilon \ast \varphi_\epsilon}_{L^2} \norm{Du_\epsilon}_{L^4}^2 \leq \norm{Du_\epsilon}_{L^2}  \norm{Du_\epsilon}_{L^\frac{6}{3-2\alpha}}^{2\theta} \norm{Du_\epsilon}_{L^2}^{2(1-\theta)}  \nonumber \\
&\leq \bar C^{2\theta} \norm{(-\Delta)^{\alpha/2} Du_\epsilon}_{L^2}^{2\theta} \norm{Du_\epsilon}_{L^2}^{3-2\theta} \label{eq: estonL3norm}\, .
\end{align}
where $\theta=\theta(\alpha):= \frac{3}{4\alpha}$ solves $
\frac{1}{4} = \frac{\theta(3-2\alpha)}{6} + \frac{1-\theta}{2} \, .$
Hence, for $\alpha> \frac{3}{4}$, we may apply Young with exponents $\frac{1}{\theta} = \frac{4\alpha}{3}$ and $ \frac{4\alpha}{4\alpha-3}$ to obtain
\begin{align*}
\int \lvert Du_\epsilon \ast \varphi_\epsilon \rvert \lvert Du_\epsilon \rvert^2 \, \mathrm{d}x \leq \frac{3}{4\alpha}\norm{(-\Delta)^{\alpha/2} Du_\epsilon}_{L^2}^2 + \frac{(4\alpha-3) \bar C^{\frac{8\alpha \theta}{4\alpha-3}}}{4\alpha} \norm{Du_\epsilon}_{L^2}^{\frac{4\alpha(3-2\theta)}{4\alpha-3}} \, .
\end{align*}
Reabsorbing $\frac{3}{4\alpha}\norm{(-\Delta)^{\alpha/2} Du_\epsilon}_{L^2}^2$ on the left-hand side of \eqref{eq: energyineq H1} yields
\begin{equation}\label{eq: energyineq H1 final}
\frac{\mathrm{d}}{\mathrm{d}t} \int \lvert Du_\epsilon \rvert^2 \, \mathrm{d}x  +\left(1-\frac{3}{4\alpha}\right) \int \lvert (-\Delta)^{\alpha/2} Du_\epsilon \rvert^2 \, \mathrm{d}x \leq \frac{2(4\alpha-3)C_2^{-1}}{4\alpha} \left( \int \lvert Du_\epsilon \rvert^2 \, \mathrm{d}x \right)^\beta \, ,
\end{equation}
where $\beta=\beta(\alpha):= \frac{3(2\alpha-1)}{4\alpha-3}$ and $C_2=C_2(\alpha)=\bar C^\frac{-6}{4\alpha-3}$. Setting 
\begin{equation*}
T=T(\norm{\nabla u_0}_{L^2}, \alpha) := \frac{4\alpha}{2(4\alpha-3) \bar C^\frac{6}{4\alpha-3} (\beta-1) \norm{\nabla u_0}_{L^2}^{2(\beta-1)}} =  C_2\norm{\nabla u_0}_{L^2}^{-\frac{4\alpha}{4\alpha-3}} \, ,
\end{equation*}
we have that for any $0 \leq t < T$ the estimate
\begin{equation}\label{eq: uniform bound on H1}
\norm{Du_\epsilon(t)}_{L^2}^2 \leq \frac{\norm{\nabla u_0}_{L^2}^2}{(1-C_2^{-1}t \norm{\nabla u_0}_{L^2}^{2(\beta-1)})^\frac{1}{\beta-1}} \, .
\end{equation}
Recalling \eqref{eq: energyineq H1 final}, we infer that  $\{u_\epsilon\}_{\epsilon >0}$ is uniformly bounded in $L^\infty([0,T), H^1)\cap L^2([0,T), H^{1+\alpha})$. By a standard argument, we can now pass to the limit $\epsilon \to 0$ using weak lower semicontinuity and the strong convergence of $u_\epsilon \rightarrow u$ in $L^p_{loc}(\mathbb{R}^3\times [0,T])$ for $p<\frac{6+4\alpha}{3}$.
\end{proof}

\begin{prop}\label{prop: LeraySingTimeHalpha} Let $\frac{5}{6} < \alpha < 1$ and $u_0 \in W^{\alpha,2}(\mathbb{R}^3)$ divergence-free. Then there exists a universal $C_1=C_1(\alpha)$, uniformly bounded away from $\alpha=\frac{5}{6}$, such that, setting 
\begin{equation} \label{eq: LeraySingTimeDefHalpha}
T(\norm{(-\Delta)^{\alpha/2} u_0}_{L^2}, \alpha):= C_1 \norm{(-\Delta)^{\alpha/2} u_0}_{L^2}^{-\frac{4\alpha}{6\alpha-5}} \, ,
\end{equation}
there exists a unique solution $u$ to \eqref{eq: NSalpha} on $(0,T)$ satisfying $u \in L^\infty([0,T), W^{\alpha,2}) \cap L^2([0,T), W^{2\alpha, 2})$ which is smooth on $(0,T)$.
\end{prop}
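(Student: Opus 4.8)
The plan is to run the argument of Proposition~\ref{prop: LeraySingTime}, with $\norm{(-\Delta)^{\alpha/2}u_0}_{L^2}$ in place of $\norm{\nabla u_0}_{L^2}$ and the fractional derivative $(-\Delta)^{\alpha/2}$ in place of $\partial_j$. As there, uniqueness among Leray--Hopf solutions will follow from Proposition~\ref{prop: weakstrong uniq} once $u\in L^2([0,T),W^{2\alpha,2})$ is established, because then $(-\Delta)^{(1-\alpha)/2}u\in L^2([0,T),H^{3\alpha-1})\hookrightarrow L^2([0,T),L^\infty)$ precisely since $\alpha>\frac56$, and interior smoothness is a standard bootstrap. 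Hence it is enough to exhibit the Leray--Hopf solution produced by the regularization \eqref{eq: regularizedNS} and to show the asserted bounds on $[0,T)$. All estimates are performed on the smooth global solutions $(u_\epsilon,p_\epsilon)$, passing to the limit $\epsilon\to0$ only at the end via weak-$*$ compactness, lower semicontinuity of the norms, and the strong $L^p_{\rm loc}$ convergence $u_\epsilon\to u$.

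First I would differentiate the first equation in \eqref{eq: regularizedNS} by $(-\Delta)^{\alpha/2}$ and test against $(-\Delta)^{\alpha/2}u_\epsilon$ (a cutoff $\psi_R$ and the limit $R\to\infty$ make this rigorous; the pressure term drops by incompressibility and Calder\'on--Zygmund bounds, exactly as in Proposition~\ref{prop: LeraySingTime}), obtaining
\[
\frac12\frac{\mathrm d}{\mathrm dt}\norm{(-\Delta)^{\alpha/2}u_\epsilon}_{L^2}^2+\norm{(-\Delta)^{\alpha}u_\epsilon}_{L^2}^2= -\int (-\Delta)^{\alpha/2}\big[(u_\epsilon\ast\varphi_\epsilon)\cdot\nabla u_\epsilon\big]\cdot(-\Delta)^{\alpha/2}u_\epsilon\,\mathrm dx=:\mathrm{NL}.
\]
Since $\div(u_\epsilon\ast\varphi_\epsilon)=0$, the transport piece $(u_\epsilon\ast\varphi_\epsilon)\cdot\nabla(-\Delta)^{\alpha/2}u_\epsilon$ integrates to zero against $(-\Delta)^{\alpha/2}u_\epsilon$, so $\mathrm{NL}=-\int[(-\Delta)^{\alpha/2},(u_\epsilon\ast\varphi_\epsilon)\cdot\nabla]u_\epsilon\cdot(-\Delta)^{\alpha/2}u_\epsilon\,\mathrm dx$. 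I would then write this commutator, componentwise, as the Kato--Ponce commutator of \eqref{eqn:commuta} (with $s=\alpha<1$, $s_1=\alpha$, $s_2=0$) plus the lower-order term $(-\Delta)^{\alpha/2}(u_\epsilon\ast\varphi_\epsilon)\cdot\nabla u_\epsilon$, and estimate both by H\"older and $\norm{(-\Delta)^{\alpha/2}(u_\epsilon\ast\varphi_\epsilon)}_{L^p}\le\norm{(-\Delta)^{\alpha/2}u_\epsilon}_{L^p}$ (Young's convolution inequality). Taking all H\"older exponents equal to $3$ (so that \eqref{eqn:commuta} applies with $p_1=p_2=3\in[2,12]$, $r=\tfrac32$), this gives $\abs{\mathrm{NL}}\le(1+\bar D(\alpha))\,\norm{(-\Delta)^{\alpha/2}u_\epsilon}_{L^3}^2\,\norm{\nabla u_\epsilon}_{L^3}$.

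Next comes interpolation. By Gagliardo--Nirenberg--Sobolev, $\norm{(-\Delta)^{\alpha/2}u_\epsilon}_{L^3}\le\bar C\norm{u_\epsilon}_{\dot H^{\alpha+\frac12}}$ and $\norm{\nabla u_\epsilon}_{L^3}\le\bar C\norm{u_\epsilon}_{\dot H^{\frac32}}$; interpolating $\dot H^{\alpha+\frac12}$ and $\dot H^{\frac32}$ between $\dot H^{\alpha}$ and $\dot H^{2\alpha}$ (the relevant weights $\tfrac1{2\alpha}$ and $\tfrac{3/2-\alpha}{\alpha}$ lie in $[0,1]$ for $\alpha\in(\tfrac56,1)$) yields, with $\theta=\theta(\alpha):=\tfrac{5-2\alpha}{2\alpha}$,
\[
\abs{\mathrm{NL}}\le C_1(\alpha)\,\norm{(-\Delta)^{\alpha/2}u_\epsilon}_{L^2}^{3-\theta}\,\norm{(-\Delta)^{\alpha}u_\epsilon}_{L^2}^{\theta}.
\]
Because $\alpha>\frac56$ one has $\theta<2$, so Young's inequality with exponents $\tfrac2\theta,\tfrac2{2-\theta}$ absorbs $\tfrac12\norm{(-\Delta)^\alpha u_\epsilon}_{L^2}^2$ on the left and leaves
\[
\frac{\mathrm d}{\mathrm dt}\norm{(-\Delta)^{\alpha/2}u_\epsilon}_{L^2}^2+\norm{(-\Delta)^{\alpha}u_\epsilon}_{L^2}^2\le C_1(\alpha)\,\norm{(-\Delta)^{\alpha/2}u_\epsilon}_{L^2}^{2\beta},\qquad \beta=\beta(\alpha):=\frac{8\alpha-5}{6\alpha-5},
\]
with $C_1(\alpha)$ bounded once $\alpha$ is bounded away from $\tfrac56$ (the Young constant degenerates only at $\theta=2$). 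Comparing with $\dot y=C_1 y^\beta$, $y(0)=\norm{(-\Delta)^{\alpha/2}u_0}_{L^2}^2$, and using $\beta-1=\tfrac{2\alpha}{6\alpha-5}$, I would read off that $\norm{(-\Delta)^{\alpha/2}u_\epsilon(t)}_{L^2}$ stays bounded, uniformly in $\epsilon$, for $t<T:=C_1\norm{(-\Delta)^{\alpha/2}u_0}_{L^2}^{-4\alpha/(6\alpha-5)}$, and integrating the differential inequality gives a uniform bound for $\int_0^t\norm{(-\Delta)^{\alpha}u_\epsilon}_{L^2}^2$; together with the basic energy inequality this shows $\{u_\epsilon\}$ bounded in $L^\infty([0,T),W^{\alpha,2})\cap L^2([0,T),W^{2\alpha,2})$. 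Passing to the limit and invoking the uniqueness and smoothness recalled above finishes the proof.

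The hard part will be, as is typical in this range, the bookkeeping that makes the exponent choice legitimate: one must check that taking all H\"older exponents equal to $3$ is simultaneously compatible with \eqref{eqn:commuta} ($p_1,p_2\in(1,\infty)$, constant controlled by $\bar D(\alpha)$), with the Sobolev embeddings used ($\dot H^{1/2}\hookrightarrow L^3$ stays strictly below the endpoint $\tfrac32$), and with all interpolation exponents lying in $[0,1]$, uniformly for $\alpha\in(\tfrac56,1)$ away from $\tfrac56$; the value $\theta=\tfrac{5-2\alpha}{2\alpha}$, and hence the threshold $\alpha>\tfrac56$ needed to reabsorb the dissipation, is forced by scaling and is insensitive to the particular admissible choice. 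A further, but routine, point is to justify the integration by parts for the nonlocal operator $(-\Delta)^{\alpha/2}$ and the passage to the limit in the regularization, exactly as in Proposition~\ref{prop: LeraySingTime}.
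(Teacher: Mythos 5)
Your proposal is correct and follows essentially the same route as the paper: regularize, exploit the cancellation of the transport term, apply the Kato--Ponce commutator \eqref{eqn:commuta} with $s_1=\alpha$, $s_2=0$, interpolate between $\dot H^\alpha$ and $\dot H^{2\alpha}$ to land on the exponent $\theta=\frac{5-2\alpha}{2\alpha}<2$, reabsorb by Young, and compare with the ODE $\dot y = C y^{(8\alpha-5)/(6\alpha-5)}$. The only (immaterial) difference is the distribution of Lebesgue exponents --- you take $p_1=p_2=3$ with the test factor in $L^3$, while the paper takes $p_1=2$, $p_2=\frac{6}{5-4\alpha}$ with the test factor in $L^{6/(4\alpha-2)}$ --- and, as you note, scaling forces the same final inequality either way.
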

\begin{proof}
We argue as in the proof of Proposition \ref{prop: LeraySingTime} to obtain the energy inequality for the regularized system, use commutator estimates as in the proof of Lemma \ref{lem: energyineq for diffeq H1}, Gagliardo-Nirenberg-Sobolev and interpolation to obtain
\begin{align*}
\frac{1}{2}\frac{\mathrm{d}}{\mathrm{d}t} \int \lvert(-\Delta)^{\alpha/2} u_\epsilon\rvert^2 \, \mathrm{d}x &+  \int \lvert (-\Delta)^{\alpha} u_\epsilon \rvert^2 \, \mathrm{d}x \leq \left  \lvert \int (-\Delta)^{\alpha/2} \left( (u_\epsilon \ast \varphi_\epsilon) \cdot \nabla) u_\epsilon\right) \cdot (-\Delta)^{\alpha/2} u_\epsilon \, \mathrm{d}x \, \right \rvert \\
&\leq (1+ \bar D) \bar C \norm{(-\Delta)^{\alpha} u_\epsilon}_{L^2} \norm{(-\Delta)^{\alpha/2} ( u_\epsilon \ast \varphi_\epsilon)}_{L^2} \norm{(-\Delta)^{\alpha/2} u_\epsilon}_{L^ \frac{6}{4\alpha-2}}  \\
&\leq (1+ \bar D)\bar C ^\frac{5-2\alpha}{2\alpha} \norm{(-\Delta)^\alpha u_\epsilon}_{L^2}^\frac{5-2\alpha}{2\alpha} \norm{(-\Delta)^{\alpha/2} u_\epsilon}_{L^2}^\frac{8\alpha-5}{2\alpha} \, ,
\end{align*}
by \eqref{eqn:commuta} applied with $s_1=\alpha$, $p_1=2$, $s_2=0$ and $p_2=\frac{6}{5-4\alpha}$.
For $\alpha>\frac{5}{6}$, we use Young to obtain after reabsorption
\begin{equation}\label{eq: Halpha ipo}
\frac{\mathrm{d}}{\mathrm{d}t} \int \lvert (-\Delta)^{\alpha/2} u_\epsilon \rvert^2(x,t) \, \mathrm{d}x \leq \frac{2 (6\alpha-5)C_1^{-1}}{4\alpha} \left( \int \lvert (-\Delta)^{\alpha/2} u_\epsilon \rvert^2(x,t) \, \mathrm{d}x \right)^\frac{8\alpha-5}{6\alpha-5} \, ,
\end{equation}
where $C_1=C_1(\alpha):=(1+\bar  D)^\frac{-4\alpha}{6\alpha-5} \bar C^\frac{-10+4\alpha}{6\alpha-5}\, .$
Defining $T$ by \eqref{eq: LeraySingTimeDefHalpha}, we conclude as before.
\end{proof}

\begin{corollary}[Leray's Estimate on Singular Times] \label{cor: LeraySingTime} Let $\frac{5}{6} < \alpha < \frac{5}{4}$ and $u_0 \in L^2(\mathbb{R}^3)$ divergence-free with $\norm{u_0}_{L^2} \leq M$. Then there exists $T^\ast = T^\ast (M, \alpha)>0$ and a Leray-Hopf solution $u$ to \eqref{eq: NSalpha} which is smooth on $[T^\ast, + \infty)$. Moreover, $T^\ast$ is uniformly bounded for $ \alpha \in (\frac{5}{6}, \frac{5}{4})$ and 
\begin{equation}\label{eq: Tstar behavat54}
\lim_{\alpha \uparrow \frac{5}{4}} T^\ast(M, \alpha) =0 \, .
\end{equation}
\end{corollary}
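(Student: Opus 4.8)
The plan is to combine the existence of a Leray-Hopf solution (Theorem \ref{thm: existLerayHopf}), the global energy inequality \eqref{eq: energyineq2}, and the local-in-time smoothing estimates of Propositions \ref{prop: LeraySingTime} and \ref{prop: LeraySingTimeHalpha} via a Chebyshev-type argument on the good integrability coming from the dissipation. Concretely, fix a Leray-Hopf solution $u$ with initial datum $u_0$. From \eqref{eq: energyineq2} with $s=0$ we get $\int_0^\infty \norm{(-\Delta)^{\alpha/2} u(\cdot,\tau)}_{L^2}^2 \, \mathrm{d}\tau \leq \frac12 \norm{u_0}_{L^2}^2 \leq \frac12 M^2$. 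I will distinguish the two regimes $\alpha \in (\frac56, 1)$ and $\alpha \in [1, \frac54)$, using $\norm{(-\Delta)^{\alpha/2}u}_{L^2}$ as the good quantity in the first case and $\norm{\nabla u}_{L^2}$ (which is controlled by $\norm{(-\Delta)^{\alpha/2}u}_{L^2}$ through interpolation with $\norm{u}_{L^2} \le M$ when $\alpha \ge 1$) in the second.

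First I would argue as follows. Given any threshold $\Lambda>0$, the set $G_\Lambda := \{ \tau > 0 : \norm{(-\Delta)^{\alpha/2} u(\cdot,\tau)}_{L^2} \leq \Lambda \}$ has, by Chebyshev, a complement of measure at most $\frac{M^2}{2\Lambda^2}$; hence for any interval of length exceeding $\frac{M^2}{2\Lambda^2}$ there is a time $t_0 \in G_\Lambda$ in it. In particular there exists such a $t_0$ with, say, $t_0 \leq \frac{M^2}{\Lambda^2}$. At this $t_0$ we have $u(\cdot, t_0) \in H^\alpha$ (for $\alpha < 1$, using also $u \in L^\infty_t L^2_x$), and we may restart the solution there by applying Proposition \ref{prop: LeraySingTimeHalpha} (if $\frac56 < \alpha < 1$) or Proposition \ref{prop: LeraySingTime} (if $1 \leq \alpha < \frac54$, after interpolating $\norm{\nabla u(\cdot,t_0)}_{L^2} \leq \norm{(-\Delta)^{\alpha/2} u(\cdot,t_0)}_{L^2}^{1/\alpha} \norm{u(\cdot,t_0)}_{L^2}^{1-1/\alpha} \leq \Lambda^{1/\alpha} M^{1-1/\alpha}$): this produces a smooth solution on $(t_0, t_0 + \tau_\Lambda)$ where $\tau_\Lambda$ is a definite positive length depending only on $\Lambda$, $M$, $\alpha$ via \eqref{eq: LeraySingTimeDefHalpha}, resp. \eqref{eq: LeraySingTimeDef}. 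The restarted solution agrees with $u$ by weak-strong uniqueness (Proposition \ref{prop: weakstrong uniq}, whose hypotheses \eqref{eq: WS} are met exactly as in the proof of Proposition \ref{prop: LeraySingTime}).

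The crux is to choose $\Lambda$ so that the smoothness window $\tau_\Lambda$ outlasts the waiting time $\frac{M^2}{2\Lambda^2}$ needed to guarantee a good restart time below any prescribed level, and then to iterate. Once $\tau_\Lambda > \frac{M^2}{2\Lambda^2}$, every interval $[T_0, T_0 + \frac{M^2}{2\Lambda^2}]$ with $T_0$ arbitrarily large contains a good time $t_0$ from which $u$ is smooth for a length $\tau_\Lambda$ exceeding the gap, so the smooth windows overlap and $u$ is smooth on $[T^\ast, +\infty)$ for $T^\ast := T^\ast(M,\alpha)$ the first such good time, which by the Chebyshev bound can be taken $\leq \frac{M^2}{2\Lambda^2} + \tau_\Lambda$; solving the inequality $\tau_\Lambda > \frac{M^2}{2\Lambda^2}$ explicitly using the power-law form of $\tau_\Lambda$ (exponent $-\frac{4\alpha}{6\alpha-5}$, resp. $-\frac{4\alpha}{4\alpha-3}$ in $\Lambda^{1/\alpha} M^{1-1/\alpha}$) pins down an admissible $\Lambda = \Lambda(M,\alpha)$ and hence $T^\ast(M,\alpha)$ as an explicit power of $M$ with constants uniformly bounded for $\alpha \in (\frac56, \frac54)$, by the uniform bounds on $C_1(\alpha)$ and $C_2(\alpha)$ in Propositions \ref{prop: LeraySingTimeHalpha} and \ref{prop: LeraySingTime}. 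Finally, \eqref{eq: Tstar behavat54} follows because as $\alpha \uparrow \frac54$ we are in the second regime, where $C_2(\alpha) = \bar C^{-6/(4\alpha-3)}$ stays bounded while the relevant exponents stay bounded, and one checks the resulting closed-form expression for $T^\ast(M,\alpha)$ tends to $0$; the main technical obstacle is bookkeeping the interpolation exponents so that the waiting-time versus smoothing-window inequality can actually be solved with $\alpha$-uniform constants and yields a limit $0$ at $\alpha = \frac54$.
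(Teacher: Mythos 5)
Your construction of $T^\ast$ matches the paper's: use the global energy inequality to find, in any window of length $T^\ast$, a time $\bar t$ with $\norm{(-\Delta)^{\alpha/2}u(\cdot,\bar t)}_{L^2}^2\leq M^2/(2T^\ast)$, restart via Proposition \ref{prop: LeraySingTimeHalpha} (resp.\ Proposition \ref{prop: LeraySingTime} after interpolating $\norm{\nabla u}_{L^2}\leq \norm{u}_{L^2}^{1-1/\alpha}\norm{(-\Delta)^{\alpha/2}u}_{L^2}^{1/\alpha}$), require the smoothing window to exceed the waiting time, and iterate. That part, including weak--strong uniqueness to identify the restarted solution with $u$, is sound and is exactly the paper's argument for existence of $T^\ast$ and for its uniform boundedness away from $\alpha=\frac54$.

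The gap is in your proof of \eqref{eq: Tstar behavat54}. You assert that ``the relevant exponents stay bounded'' and that the closed-form $T^\ast$ from the waiting-time-versus-smoothing-window inequality tends to $0$ as $\alpha\uparrow\frac54$. Neither is true: solving \eqref{eq: choiceofTstar hyp} gives the threshold $T^\ast>\bigl(\tfrac14 M^{4\alpha}\bar C^{6}\bigr)^{\frac{1}{5-4\alpha}}$, and the exponent $\frac{1}{5-4\alpha}$ blows up as $\alpha\uparrow\frac54$; since the base exceeds $1$ for any $M$ that is not small (recall $\bar C\geq 1$), this expression tends to $+\infty$, not to $0$. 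The same happens if you optimize your $\Lambda$: the admissible $\Lambda$ is forced to $0$ and the waiting time $M^2/(2\Lambda^2)$ diverges. This failure is not a bookkeeping issue but reflects that $\norm{u_0}_{L^2}$ becomes scaling-critical exactly at $\alpha=\frac54$. The paper therefore proves \eqref{eq: Tstar behavat54} by a genuinely different mechanism: if $T$ is a non-regular time, Proposition \ref{prop: LeraySingTime} forces the blow-up rate $T-t> C_2\norm{\nabla u(\cdot,t)}_{L^2}^{-\frac{4\alpha}{4\alpha-3}}$ for a.e.\ $t<T$, which after interpolation gives a pointwise-in-time lower bound $\norm{(-\Delta)^{\alpha/2}u(\cdot,t)}_{L^2}^2\gtrsim (T-t)^{-1+\frac{5-4\alpha}{2}}\norm{u_0}_{L^2}^{-(2\alpha-2)}$; integrating this nearly non-integrable singularity over $(0,T)$ and comparing with the energy bound $\frac12\norm{u_0}_{L^2}^2$ produces the factor $(5-4\alpha)$ in the base of \eqref{eq: upperbound for Tstar hyp}, namely $T^\ast\leq \bigl(\tfrac{5-4\alpha}{2}\bar C^3M^{2\alpha}\bigr)^{\frac{2}{5-4\alpha}}$, whose base tends to $0$ and hence which vanishes in the limit. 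You need this (or an equivalent) additional argument; your proposal as written does not yield \eqref{eq: Tstar behavat54}.
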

\begin{proof} Let $T^\ast>0$. From the energy inequality, we infer that there exists $\bar{t} \in (0,T^\ast)$ such that 
\begin{equation*}
\norm{(-\Delta)^{\alpha/2} u(\cdot, \bar{t})}_{L^2}^2 \leq \frac{\norm{u_0}_{L^2}^2}{2T^\ast}  \leq \frac{M^2}{2T^\ast} \, .
\end{equation*}
Consider first $\alpha<1$. By Proposition \ref{prop: LeraySingTimeHalpha} the Leray-Hopf solution with initial datum $u(\cdot, \bar{t})$ is smooth and unique until
\begin{equation*}
{C_1}{\norm{(-\Delta)^{\alpha/2} u(\cdot, \bar{t})}_{L^2}^\frac{-4\alpha}{6\alpha-5}} \geq {C_1(2T^\ast)^\frac{2\alpha}{6\alpha-5}}{M^\frac{-4\alpha}{6\alpha-5}} \, .
\end{equation*}
If we choose $T^\ast= T^\ast(M, \alpha)$ large enough, such that 
\begin{equation}\label{eq: choiceofTstar ipo}
{C_1(2T^\ast)^\frac{2\alpha}{6\alpha-5}}{M^\frac{-4\alpha}{6\alpha-5}} > T^\ast  \text{, or equivalently, } T^\ast > {M^\frac{4\alpha}{5-4\alpha}}{2^\frac{-2\alpha}{5-4\alpha}C_1^\frac{-6\alpha-5}{5-4\alpha}}\, ,
\end{equation}
then the Leray-Hopf solution is smooth and unique on $(\bar{t}, \bar{t}+T^\ast)$ and we can iterate this procedure thanks to the energy inequality.
If $\alpha\geq1$, we notice that by interpolation of Sobolev spaces 
\begin{equation*}
\norm{\nabla u(\cdot, \bar{t})}_{L^2} \leq \norm{u(\cdot, \bar{t})}_{L^2}^{1-\frac{1}{\alpha}} \norm{(-\Delta)^{\alpha/2}u(\cdot, \bar{t})}_{L^2}^\frac{1}{\alpha} \leq M(2T^\ast)^{-\frac{1}{2 \alpha}} \, ,
\end{equation*}
and hence from Proposition \ref{prop: LeraySingTime} the Leray-Hopf solution starting at $\bar{t}$ is smooth and unique until 
\begin{equation*}
C_2\norm{\nabla u(\cdot, \bar{t})}_{L^2}^{-\frac{4\alpha}{4\alpha-3}} \geq C_2(2T^\ast)^\frac{2}{4\alpha-3}M^{-\frac{4\alpha}{4\alpha-3}} \, .
\end{equation*}
As in the ipodissipative case, choosing $T^\ast=T^\ast(M, \alpha)$ large enough satisfying 
\begin{equation}\label{eq: choiceofTstar hyp}
C_2(2T^\ast)^\frac{2}{4\alpha-3}M^{-\frac{4\alpha}{4\alpha-3}} > T^\ast \text{, or equivalently, } T^\ast > M^\frac{4\alpha}{5-4\alpha}2^{-\frac{2}{5-4\alpha}} C_2^{-\frac{4\alpha-3}{5-4\alpha}}
\end{equation} 
allows to iterate the argument and build a global solution starting at $\bar{t}$. Observe now that the fact that $T^\ast(M, \alpha)$ is uniformly bounded away from $\alpha=\frac{5}{4}$ follows from its choice in  \eqref{eq: choiceofTstar ipo} and \eqref{eq: choiceofTstar hyp} and the explicit expression of the constants $C_1$ and $C_2$. Let us now establish \eqref{eq: Tstar behavat54}. Take a non-regular time $T>0$ of a Leray-Hopf solution $(u, p)$ for the Navier-Stokes equations of order $\alpha\geq 1$. For almost every  $0<t<T$, $u(\cdot, t) \in W^{\alpha,2}$ and thus by Proposition \ref{prop: LeraySingTime}, we must have
\begin{equation*}
T-t > C_2\norm{\nabla u(\cdot, t)}_{L^2}^{-\frac{4\alpha}{4\alpha-3}} \geq C_2\norm{(-\Delta)^{\alpha/2} u(\cdot, t)}_{L^2}^{-\frac{4}{4\alpha-3}} \norm{u_0}_{L^2}^{-\frac{4(\alpha-1)}{4\alpha-3}} \, .
\end{equation*}
The last inequality follows by interpolation of Sobolev spaces.
Recalling $C_2 = \bar C^{-\frac{6}{4\alpha-3}}$ and using the energy inequality, we deduce that
\begin{equation*}
\frac{1}{2} \norm{u_0}_{L^2}^2 \geq \int_0^T \norm{(-\Delta)^{\alpha/2} u(t)}_{L^2}^2 \, \mathrm{d}t \geq \frac{C_2^\frac{4\alpha-3}{2}}{\norm{u_0}_{L^2}^{2\alpha-2}} \int_0^T \frac{\mathrm{d}t}{(T-t)^{1-\frac{5-4\alpha}{2}}} =  \frac{2 T^\frac{5-4\alpha}{2}}{\bar C^3\norm{u_0}_{L^2}^{2\alpha-2}(5-4\alpha)}  \, .
\end{equation*}
Hence we have found the following improved upper bound for $T^\ast$ for $\alpha \geq 1$
\begin{equation}\label{eq: upperbound for Tstar hyp}
T^\ast(M, \alpha) \leq \left(\frac{(5-4\alpha)}{2}\bar C^3 M^{2\alpha} \right)^\frac{2}{5-4\alpha} \,.
\end{equation}
We deduce that for $M>0$ fixed
\begin{equation*}
\lim_{\alpha \uparrow \frac{5}{4}} T^\ast(M, \alpha) \leq \lim_{\alpha \uparrow \frac{5}{4}}(\bar C^3 M^{2\alpha} (5-4\alpha))^\frac{2}{5-4\alpha} = 0 \, .\qedhere
\end{equation*}
\end{proof}

\section{Eventual regularization of suitable weak solutions}\label{sec:reg-suit}
The eventual regularization property for $\frac{3}{4}<\alpha\leq\frac56$ can be obtained relying on partial regularity results.
The difference with Leray's estimate of Section~\ref{sec:leray} on the eventual regularization time $T^\ast$ consist in the fact that the dependence of $T^\ast$ on $M$ and $\alpha$ cannot be made explicit.
\begin{prop}\label{prop: eventualreg} Let $\frac{3}{4}<\alpha\leq1$ and $p\in [1,2)$. Consider $u_0 \in L^p \cap L^2$ divergence-free with $\norm{u_0}_{L^2 \cap L^p} \leq M$. Then there exists $T^\ast= T^\ast(M, \alpha,p)>0$ and a suitable weak solution $u$ to \eqref{eq: NSalpha} which is smooth on $[T^\ast, \infty)$. Moreover, $T^\ast$ is uniformly bounded away from $\alpha=\frac{3}{4}$.
\end{prop}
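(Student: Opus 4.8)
The plan is to build a suitable weak solution issuing from $u_0$, to show that its $L^2$ norm decays to $0$ as $t\to\infty$ — the only place where the assumption $p<2$ is used — and then to apply the $\epsilon$-regularity theory for fractional suitable weak solutions of \cite{TY} (and of \cite{CKN} for $\alpha=1$) on cylinders of one \emph{fixed} radius, centred at large times, on which all the relevant scale-invariant quantities have become small. For the construction, the regularisation \eqref{eq: regularizedNS} recalled in Section~\ref{sec:prelim} produces a solution $u$ of \eqref{eq: NSalpha} with datum $u_0$ that is simultaneously Leray--Hopf and suitable, with $u\in L^\infty((0,\infty),L^2)\cap L^2((0,\infty),\dot H^\alpha)$, $\|u(t)\|_{L^2}\le M$, $\int_0^\infty\|(-\Delta)^{\alpha/2}u(\tau)\|_{L^2}^2\,\mathrm{d}\tau\le\tfrac12M^2$, and satisfying the localised energy inequality \eqref{eq: localized energy inequality ipo} (resp. the classical one when $\alpha=1$), so that the partial regularity results of \cite{TY} (resp. \cite{CKN}) apply to it.

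For the decay, I would run the Fourier-splitting method of Schonbek and Wiegner, adapted to $(-\Delta)^\alpha$, on the smooth approximations $u_\epsilon$ solving \eqref{eq: regularizedNS} (which obey the energy equality) and then pass to the limit. On the Fourier side one has $\partial_t\widehat{u_\epsilon}+|\xi|^{2\alpha}\widehat{u_\epsilon}=\widehat{F_\epsilon}$ with $F_\epsilon=-\mathbb P\,\div\big((u_\epsilon\ast\varphi_\epsilon)\otimes u_\epsilon\big)$ and $|\widehat{F_\epsilon}(\xi,t)|\le|\xi|\,\|u_\epsilon(t)\|_{L^2}^2\le M^2|\xi|$, hence $|\widehat{u_\epsilon}(\xi,t)|\le|\widehat{u_0}(\xi)|+M^2|\xi|\,t$ by Duhamel; inserting this in $\|(-\Delta)^{\alpha/2}u_\epsilon\|_{L^2}^2\ge g^{2\alpha}\big(\|u_\epsilon\|_{L^2}^2-\int_{|\xi|\le g}|\widehat{u_\epsilon}|^2\big)$ with $g(t)^{2\alpha}=(1+t)^{-1}$, using the Hausdorff--Young bound $\int_{|\xi|\le g}|\widehat{u_0}|^2\lesssim g^{3(2/p-1)}\|u_0\|_{L^p}^2$ (a positive power of $g$ precisely because $p<2$) and $\int_{|\xi|\le g}M^4|\xi|^2t^2\lesssim g^5M^4t^2$ (which decays since $\alpha\le1<\tfrac54$), a Gronwall step followed by a Schonbek-type bootstrap gives $\|u_\epsilon(t)\|_{L^2}^2\le C(M,p,\alpha)(1+t)^{-\sigma}$ with $\sigma=\sigma(p,\alpha)>0$, uniformly in $\epsilon$. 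Passing to the limit through $u_\epsilon\to u$ in $L^2_{\mathrm{loc}}$ gives $\|u(t)\|_{L^2}^2\le C(M,p,\alpha)(1+t)^{-\sigma}$; in particular $\|u(t)\|_{L^2}\to0$ and, by the energy inequality, $\int_t^\infty\|(-\Delta)^{\alpha/2}u(\tau)\|_{L^2}^2\,\mathrm{d}\tau\to0$ as $t\to\infty$.

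To conclude, recall from \cite{TY} (and \cite{CKN} for $\alpha=1$) that there are $r_0\in(0,1]$ and $\epsilon_0=\epsilon_0(\alpha)>0$, bounded below for $\alpha$ bounded away from $\tfrac34$, such that $(x,t)$ is a regular point of a suitable weak solution whenever, for some $r\le r_0$,
\begin{equation*}
\begin{aligned}
\mathcal A_r(x,t):=&\frac{1}{r^{5-4\alpha}}\sup_{[t-r^{2\alpha},t]}\int_{B_r(x)}|u|^2 +\frac{c_\alpha}{r^{5-4\alpha}}\int_{Q_r^\ast(x,t)}y^b|\overline{\nabla}u^\ast|^2 \\
&+\frac{1}{r^{6-4\alpha}}\int_{t-r^{2\alpha}}^{t}\int_{B_r(x)}\big(|u|^3+|p-(p)_{B_r}|^{3/2}\big)<\epsilon_0 ,
\end{aligned}
\end{equation*}
with $Q_r^\ast$ the corresponding cylinder in the Caffarelli--Silvestre extension. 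Fix $r=r_0$: the first term is $\le r_0^{-(5-4\alpha)}\sup_{[t-r_0^{2\alpha},t]}\|u(\tau)\|_{L^2(\mathbb R^3)}^2$; by the Caffarelli--Silvestre identity the second is $\le r_0^{-(5-4\alpha)}\int_{t-r_0^{2\alpha}}^t\|(-\Delta)^{\alpha/2}u(\tau)\|_{L^2}^2\,\mathrm{d}\tau$; and, using $\|p(\tau)\|_{L^{3/2}(\mathbb R^3)}\le C\|u(\tau)\|_{L^3(\mathbb R^3)}^2$ (Calder\'on--Zygmund), the interpolation $\|u\|_{L^3}\le\bar C_\alpha^{1/(2\alpha)}\|u\|_{L^2}^{1-1/(2\alpha)}\|(-\Delta)^{\alpha/2}u\|_{L^2}^{1/(2\alpha)}$ and H\"older in time — legitimate since $\tfrac{3}{2\alpha}\le2$ for $\alpha\ge\tfrac34$ — the third is $\le C(\alpha,r_0)\sup_{[t-r_0^{2\alpha},t]}\|u(\tau)\|_{L^2}^{3-\frac{3}{2\alpha}}\big(\int_{t-r_0^{2\alpha}}^t\|(-\Delta)^{\alpha/2}u(\tau)\|_{L^2}^2\,\mathrm{d}\tau\big)^{\frac{3}{4\alpha}}$. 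By the previous step each of the three terms tends to $0$ as $t\to\infty$, uniformly in $x$, so there is $T^\ast=T^\ast(M,\alpha,p)$ with $\mathcal A_{r_0}(x,t)<\epsilon_0$ for every $x$ and every $t\ge T^\ast$; hence $Sing(u)\subset\mathbb R^3\times(0,T^\ast)$ and $u$ is smooth on $\mathbb R^3\times[T^\ast,\infty)$. The uniformity of $T^\ast$ away from $\alpha=\tfrac34$ follows since $\epsilon_0(\alpha)$, the exponents $3-\tfrac{3}{2\alpha}$ and $\tfrac{3}{4\alpha}$, and the constants $c_\alpha,\bar C_\alpha,r_0$ are all controlled there, and the decay constant and exponent $\sigma(p,\alpha)$ depend continuously on $\alpha\in(\tfrac34,1]$; $T^\ast$ cannot be made explicit because neither $\epsilon_0$ (coming from the partial-regularity iteration) nor the constant in the decay estimate is tracked quantitatively, in contrast with Leray's explicit bound of Section~\ref{sec:leray}.

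The heart of the argument — and the step I expect to be the main obstacle — is the decay $\|u(t)\|_{L^2}\to0$ for the low-regularity suitable weak solution, where the nonlinear low-frequency contribution must be absorbed by a bootstrap and the hypothesis $p<2$ is used decisively: it is precisely this decay that makes the term $r^{-(5-4\alpha)}\sup_\tau\int_{B_r}|u|^2$ of $\mathcal A_r$ small at the fixed scale $r_0$ (without it this term is only $\le M^2$), whereas the dissipation term and the cubic/pressure term become small for free from the energy inequality once $\alpha>\tfrac34$. A secondary difficulty is bookkeeping: invoking the criterion of \cite{TY} in exactly the scale-invariant form used above and keeping all constants uniform as $\alpha\to\tfrac34^+$.
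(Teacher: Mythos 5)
Your proposal is correct and follows essentially the same route as the paper: construct the suitable weak solution via the regularisation \eqref{eq: regularizedNS}, obtain the Schonbek-type algebraic $L^2$ decay (which the paper simply cites from \cite{JY}, remarking that the Fourier-splitting argument you sketch transfers to the regularised approximations), and then verify the $\epsilon$-regularity criterion of \cite{TY} on cylinders of a fixed unit scale at large times, using the smallness of $\sup_{\tau}\norm{u(\tau)}_{L^2}$ together with the vanishing tail of the dissipation from the energy inequality. The only cosmetic difference is the form of the criterion invoked: the paper uses the version $\int_{t_0-1}^{t_0}\int_{B_1}(\mathcal{M}\lvert u\rvert^2+\lvert p\rvert)^{3/2}<\epsilon$ with the maximal function accounting for non-locality and estimates it by H\"older, the maximal-function bound, Calder\'on--Zygmund and interpolation, whereas you use the scale-invariant quantities with the Caffarelli--Silvestre extension term; both lead to the same conclusion and the same (non-explicit) $T^\ast$.
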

\begin{proof} 
Let us consider a global suitable weak solution $u$ of the Navier-Stokes equations of order $\alpha$ obtained as the limit of the regularized system \eqref{eq: regularizedNS}. Let us recall from \cite{TY, Scheffer1, CDM} that there exists $\epsilon=\epsilon(\alpha)>0$ and $\kappa=\kappa(\alpha)>0$ such that if for some $(x_0,t_0)$
\begin{equation}\label{eq: epsilonreg}
 \int_{t_0-1}^{t_0}\int_{B_1(x_0)} \left( \mathcal{M} \lvert u \rvert^2 + \lvert p\rvert\right)^\frac{3}{2} < \epsilon,
\end{equation}
then $u$ is smooth in a neighborhood of $(x_0,t_0)$. Indeed, the maximal operator $\mathcal{M} $ accounts for the non-local effects of the fractional Laplacian and can be removed in the case $\alpha=1$, as in the classical $\epsilon$-regularity of Scheffer. 

From \cite[Theorem 1.1]{JY}, we infer 
\begin{equation}\label{eq: L2decay}
\norm{u(t)}_{L^2}^2 \leq C(1+t)^{-\frac{3}{2\alpha}(\frac{2}{p}-1)} \, ,
\end{equation}
where $C=C(M, \alpha)>0$. In \cite{JY}, this decay rate is obtained for the Leray-Hopf solution obtained through Garlekin approximation. However, as in \cite{Schonbek} for $\alpha=1$, the argument also applies to the Leray-Hopf solution obtained through the regularized system \eqref{eq: regularizedNS}. Since the decay rate coincides with the one of the fractional heat equation, it cannot, in general, be expected for $u_0 \in L^2$ only. Let $T^\ast:= T^\ast(\alpha, M,p)$ be such that $\norm{u(t)}_{L^2}^2 \leq  \epsilon^\frac{1}{3}$ for $t\in [T^\ast-2^\alpha, + \infty)$.

We estimate by H\"older, the maximal function estimate $\norm{\mathcal{M} f}_{L^p} \leq C \norm{f}_{L^p}$ for $1<p\leq + \infty$, Calderon-Zygmund and interpolation
\begin{align}\label{eq: suitsol smallnessreq}
 \int_{t_0-1}^{t_0}\int_{B_1} \big (\mathcal{M} \lvert u \rvert^2 &+ \lvert p \rvert \big )^\frac{3}{2} \, \mathrm{d}x \, \mathrm{d}t 
\leq \left(  \int_{t_0-1}^{t_0}\int_{B_1} \left( \mathcal{M} \lvert u \rvert^2 + \lvert p \rvert\right)^\frac{3+2\alpha}{3} \, \mathrm{d}x \, \mathrm{d}t \right)^\frac{9}{2(3+2\alpha)} 
 \nonumber \\
&\leq C \left(\int_{t_0-1}^{t_0} \int_{\mathbb{R}^3} \lvert u \rvert^\frac{6+4\alpha}{3} \, \mathrm{d}x \, \mathrm{d}t \right)^\frac{9}{2(3+2\alpha)} \nonumber \\
&\leq C \left(\int_{t_0-1}^{t_0} \int_{\mathbb{R}^3} \lvert (-\Delta)^{\alpha/2} u \rvert^2 \, \mathrm{d}x \, \mathrm{d}t \right)^\frac{9}{3+2\alpha} \left( \esssup_{t \in [t_0-1, t_0]} \norm{u(t)}_{L^2}\right)^\frac{6\alpha}{3+2\alpha} \, .
\end{align}
This concludes the proof, since from \eqref{eq: epsilonreg} we then infer that $u$ is regular in $[T^\ast, \infty)$.
\end{proof}

\section{Global-in-time stability}
\begin{prop}[Global-in-time stability]\label{prop:global} Let $\frac{3}{4}< \alpha \leq \frac{5}{4}$, $p\in [1,2)$ and $s\in (\frac{5}{2}-2\alpha, 1]$. Assume that there exists an a priori global smooth solution $u\in C([0,+\infty), H^s)$  to the fractional Navier-Stokes equations of order $\alpha$ with divergence-free initial datum $u_0\in H^s\cap L^p$. If additionally
\begin{equation}\label{eq: add integrability assump}
u\in L^2_{loc}([0,+\infty), H^{s+\alpha+\delta}) \text{ and } Du \in L^2_{loc}([0,+\infty), H^s)
\end{equation}
for some positive $\delta>0$, then there exists $\epsilon>0$ such that for all $v_0\in H^s \cap L^p$ divergence-free and all $\frac{3}{4}< \beta \leq \frac{5}{4}$ satisfying
\begin{equation}\label{hyp}
\norm{u_0-v_0}_{H^s\cap L^p}^2 + \lvert \alpha-\beta \rvert^\delta < \epsilon
\end{equation}
there exists a unique global smooth solution $v\in C([0,+\infty), H^s)$ to the fractional Navier-Stokes equations of order $\beta$ with initial datum $v_0$. 
\end{prop}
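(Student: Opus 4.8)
The plan is to combine the finite-time stability of Proposition~\ref{prop: locstab H3}, run on a long enough interval $[0,T_0]$, with the eventual regularization of Sections~\ref{sec:leray}--\ref{sec:reg-suit} (Corollary~\ref{cor: LeraySingTime} for orders in $(\frac56,\frac54)$, Proposition~\ref{prop: eventualreg} for orders in $(\frac34,1]$, and the classical global regularity of \cite{Lions} at the endpoint $\frac54$), identifying the two pieces through the weak--strong uniqueness of Proposition~\ref{prop: weakstrong uniq}.

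First I would set $M:=\norm{u_0}_{H^s\cap L^p}+1$ and pick $\eta\in(0,\alpha-\frac34)$; note that \eqref{hyp} together with $\frac34<\beta\le\frac54$ and $\epsilon\le\eta^\delta$ confines the admissible $\beta$ to $[\alpha-\eta,\alpha+\eta]\cap(\frac34,\frac54]$, hence bounded away from $\frac34$. The eventual regularization results then furnish a single $T_0=T_0(M,\alpha,p)>0$ with the property that every divergence-free $w_0$ satisfying $\norm{w_0}_{L^2\cap L^p}\le M$ and every admissible $\beta$ admit a global Leray--Hopf (in fact suitable weak) solution of the order-$\beta$ equation with datum $w_0$ which is smooth on $[T_0,+\infty)$: for $\beta\in(\frac56,\frac54)$ this is Corollary~\ref{cor: LeraySingTime}, whose regularization time is uniformly bounded there and tends to $0$ as $\beta\uparrow\frac54$; for $\beta\in(\frac34,1]$ it is Proposition~\ref{prop: eventualreg}, whose time is uniformly bounded away from $\frac34$; and for $\beta=\frac54$ the Leray--Hopf solution is already globally smooth. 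The $L^p$-part of the bound on $w_0$ is only needed in the range $\beta\le\frac56$.

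Next I would apply Proposition~\ref{prop: locstab H3} on the finite interval $[0,T_0+1]$, whose hypotheses are exactly \eqref{eq: add integrability assump} restricted to that interval, obtaining $\epsilon_1>0$ such that any divergence-free $v_0\in H^s$ and any $\frac34<\beta\le\frac54$ with $\norm{u_0-v_0}_{H^s}^2+\abs{\alpha-\beta}^\delta<\epsilon_1$ produce a solution $v\in C([0,T_0+1],H^s)\cap L^2([0,T_0+1],H^{s+\beta})$ of the order-$\beta$ equation with datum $v_0$, smooth in the interior and unique among Leray--Hopf solutions. I would then take $\epsilon:=\min\{\epsilon_1,\eta^\delta,1\}$, so that \eqref{hyp} simultaneously forces $\abs{\alpha-\beta}\le\eta$, $\norm{v_0}_{L^2\cap L^p}<M$, and the smallness needed in the previous paragraph. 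Finally, for data satisfying \eqref{hyp}, let $\tilde v$ be a global Leray--Hopf solution of the order-$\beta$ equation with datum $v_0$ as produced above, smooth on $[T_0,+\infty)$; since on $[0,T_0+1]$ the finite-time solution $v$ is smooth and lies in the weak--strong uniqueness class \eqref{eq: WS} for the order $\beta$ (the integrability is checked as in the proof of Proposition~\ref{prop: LeraySingTime}, using $s>\frac52-2\beta$), Proposition~\ref{prop: weakstrong uniq} yields $v\equiv\tilde v$ on $[0,T_0+1]$. The function equal to $v$ on $[0,T_0+1]$ and to $\tilde v$ afterwards then lies in $C([0,+\infty),H^s)$, is smooth on $(0,T_0+1)\cup[T_0,+\infty)=(0,+\infty)$, and is the unique global smooth solution, because any other such solution is a Leray--Hopf solution and hence coincides with it by Proposition~\ref{prop: weakstrong uniq}.

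The step I expect to be the main obstacle is the uniformity of $T_0$ in the second paragraph: Corollary~\ref{cor: LeraySingTime} is genuinely uniform on $(\frac56,\frac54)$, but on $(\frac34,\frac56]$ Proposition~\ref{prop: eventualreg} delivers $T^\ast$ only qualitatively, with no explicit dependence on $M$, $\beta$ or $p$, so one must argue that $T^\ast$ stays bounded over the compact window of admissible $\beta$ and over the ball $\norm{w_0}_{L^2\cap L^p}\le M$ rather than read it from a formula; the choice $\eta<\alpha-\frac34$ is precisely what keeps $\beta$ away from the degenerate exponent $\frac34$. A subsidiary technical point is verifying that the finite-time solution $v$ indeed belongs to the class \eqref{eq: WS} for the order $\beta$, which is where the constraint $s>\frac52-2\alpha$ (and its analogue for $\beta$) enters.
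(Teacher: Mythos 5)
Your proposal is correct and follows essentially the same route as the paper: obtain a regularization time $T^\ast$ uniform over the admissible $(v_0,\beta)$ from Corollary~\ref{cor: LeraySingTime} and Proposition~\ref{prop: eventualreg}, then invoke the finite-time stability of Proposition~\ref{prop: locstab H3} with $T=T^\ast$. The only difference is that you spell out the gluing of the finite-time smooth solution with the eventually-regular suitable weak solution via the weak--strong uniqueness of Proposition~\ref{prop: weakstrong uniq}, a step the paper leaves implicit; your flagged concern about the uniformity of $T^\ast$ on $(\frac34,\frac56]$ is resolved by the uniformity claim already contained in the statement of Proposition~\ref{prop: eventualreg}.
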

\begin{remark}\label{rmk: dropLp} The requirement that $v_0 \in L^p$ for some $p\in [1,2)$ is in fact necessary only for $\alpha \in (\frac 34 , \frac 56)$ to obtain the decay of the $L^2$-norm of the solution in the proof of Proposition~\ref{sec:reg-suit}. Moreover, only boundedness of $v_0$ in $L^p$ (and not closeness to $u_0$) would suffice. 
\end{remark}

\begin{proof} Let $v_0 \in H^s\cap L^p$ divergence-free and $\frac{3}{4}< \beta \leq \frac{5}{4}$. By Corollary \ref{cor: LeraySingTime} and Proposition \ref{prop: eventualreg}, there exists an eventual regularization time $T^\ast(\norm{v_0}_{L^2\cap L^p}, \beta,p)>0$ such that suitable weak solutions to the fractional Navier-Stokes equations of order $\beta$ starting from $v_0 $ are smooth after time $T^\ast$. By Proposition \ref{prop: eventualreg}, we may choose $T^\ast$ uniformly for $\beta$ and $v_0$ verifying \eqref{hyp} for some $0<\epsilon\leq 1$. The conclusion then follows from Proposition \ref{prop: locstab H3} applied with $T=T^\ast$.
\end{proof}

\begin{proof}[Proof of Theorem~\ref{thm:main2}] We check that the assumptions \eqref{eq: add integrability assump} of Proposition~\ref{prop:global} are satisfied for $s=1$. Let $u\in C([0,+\infty), H^1)$ be an a priori global smooth solution with initial datum $u_0$. Fix $T>0$. Then, differentiating the equation $k$-times and performing energy estimates, we see that $u \in L^\infty((0,T),H^k)\cap L^2((0,T),H^{k+\alpha})$ for every $k\geq 1$. Hence, if $u_0 \in H^s$ then $u\in L^2_{loc}([0,+\infty), H^r)$ for any $1 \leq r \leq s+\alpha$. We infer that in the ipodissipative case, the additional integrability assumption \eqref{eq: add integrability assump} is fulfilled with $\delta:=1-\alpha$ provided $u_0\in H^{2-\alpha}$. Since $2-\alpha \leq 2-\frac{3}{4}=\frac{5}{4}$ for $\alpha \in (\frac{3}{4},1)$, it is enough to ask $u_0 \in H^s$ for some $s\geq \frac{5}{4}$. Finally, in the hyperdissipative case, \eqref{eq: add integrability assump} is fulfilled for any $s>1$.
\end{proof}

\begin{remark}
In the hyperdissipative range $\alpha>1$, for any $s\in (0, \frac{1}{2}]$, we could adapt the proof of Theorem \ref{thm:main2} to deduce from  Proposition~\ref{prop:global} - for instance - openness of initial data and fractional orders giving rise to global smooth solutions in 
$$\left\{ u_0 \in  H^s(\mathbb{R}^3; \mathbb{R}^3) : \div u_0=0\right\} \times \Big(\frac{5}{4}-\frac{s}{2}, \frac{5}{4}\Big]\, .$$
\end{remark}

\begin{proof}[Proof of Theorem~\ref{thm:main1}]
Let $u_0\in H^\delta$ divergence-free with $\norm{u_0}_{H^\delta} \leq M$ and let $u$ be the unique smooth Leray-Hopf solution to the fractional Navier-Stokes equations of order $\alpha=\frac{5}{4}$ starting from $u_0$. We claim the following a priori estimate
\begin{equation}\label{eq: aprioriest at 54}
\frac{\mathrm{d}}{\mathrm{d}t} \norm{u(t)}_{H^\delta}^2 + \norm{u(\tau)}_{H^{\delta+\alpha}}^2 \leq C_A \norm{(-\Delta)^{\alpha/2} u(t)}_{L^2}^2 \norm{u(t)}_{H^\delta}^2 \, ,
\end{equation}
where $C_A>0$ is independent of $\delta$. Indeed, arguing as in the proof of Lemma \ref{lem: energyineq for diffeq H1}, we obtain the following energy inequality for derivatives of order $\delta$ 
\begin{align*}
\frac{1}{2}\frac{\mathrm{d}}{\mathrm{d}t} \norm{ (-\Delta)^{\delta/2} u(t)}_{L^2}^2 &+ \norm{ (-\Delta)^{(\alpha+\delta)/2} u(t)}_{L^2}^2 \leq \left \lvert \int (-\Delta)^{\delta/2}\left( u \cdot \nabla) u \right) \cdot (-\Delta)^{\delta/2} u  \, \mathrm{d}x\right \rvert \\
&\leq (1+\bar D) \bar C ^2  \norm{(-\Delta)^{(\delta+\alpha)/2}u}_{L^2} \norm{u}_{\dot{H}^{\frac{5}{2}-\alpha}} \norm{(-\Delta)^{\delta/2} u}_{L^2}
\end{align*}
by \eqref{eqn:commuta} (applied with $p_1=\frac{6}{3-2\alpha}$, $p_2=\frac{6}{2\alpha}$, $s_1=s$, $s_2=0$). In particular, it does not depend on $\delta$. Observing that at $\alpha=\frac{5}{4}$ it holds $ \norm{u}_{\dot{H}^{\frac{5}{2}-\alpha}} = \norm{(-\Delta)^{\alpha/2} u}_{L^2}$, this proves \eqref{eq: aprioriest at 54} with $C_A = 2 (1+\bar D)^2 \bar C^4 \leq 8 \bar D^2 \bar C^4$. From \eqref{eq: aprioriest at 54}, we deduce by Gr\"onwall the estimates
\begin{align*}
\norm{u}_{L^\infty([0, T], H^\delta)}^2 &\leq \norm{u_0}_{H^\delta}^2 e^{C_A \int_0^T \int \lvert (-\Delta)^{\alpha/2} u(x,t) \rvert^2 \, \mathrm{d}x\, \mathrm{d}t} \leq M^2 e^{C_A M^2} \, ,\\
\int_0^T \norm{u(t)}_{H^{\delta+\frac{5}{4}}}^2 \, \mathrm{d}t &\leq \norm{u_0}_{H^\delta}^2 + \int_0^T \norm{(-\Delta)^{\alpha/2} u(t)}_{L^2}^2 \norm{u(t)}_{H^\delta}^2 \, \mathrm{d}t \leq M^2(1+M^4 e^{C_A M^2} )\, .
\end{align*}
In particular, $u$ satisfies the additional integrability assumption \eqref{eq: add integrability assump}. Thus, by Proposition \ref{prop:global}, there exists $\epsilon>0$ such that for any fractional order $\beta \in [1, \frac{5}{4}]$ satisfying $\lvert \frac{5}{4}-\beta \rvert < \epsilon$ there exists a unique global, smooth solution to the fractional Navier-Stokes equations of order $\beta$ starting from $u_0$. Notice that the additional assumption $u_0 \in L^p$ for some $p \in [1,2)$ can be dropped thanks to Remark \ref{rmk: dropLp}. We recall from the explicit choice of $\epsilon$ in \eqref{eq: localchoiceofepsilon} that 
\begin{equation}\label{eq: epsilonexplicit 54}
\epsilon  \leq \min \left \{ \left( \frac{\delta}{4} \right)^{\frac{1}{\delta}}, \left((C_0 T^\ast)^\frac{4\beta -5 + 2\delta}{2(\beta+\delta)} 
e^{2C_1 \norm{u}_{L^2([0,T^\ast], H^{\frac{5}{4}+\delta})}^2} \max\{C_2  \norm{u}_{L^2([0,T^\ast], H^{\frac{5}{4}+\delta})}^2,1\}\right)^{-\frac{1}{\delta}} \right \}  \,,
\end{equation}
where now $T^\ast$ is the eventual regularization time given by Corollary \ref{cor: LeraySingTime} and the constants $C_0$, $C_1$ and $C_2$ are coming from Lemma \ref{lem: energyineq for diffeq H1} applied with $s=\delta$. In the hyperdissipative range, we have an explicit upper bound on $T^\ast$ in terms of $\beta$ and $M$ through \eqref{eq: upperbound for Tstar hyp}. Indeed, it follows from \eqref{eq: upperbound for Tstar hyp} by a simple computation that $T^\ast$ can be bounded, uniformly in $\beta$, by
\begin{equation}
T^\ast \leq e^{M^\frac{5}{2} \bar C^3/e} \, ,
\end{equation}
Moreover, from the explicit expression of $C_0$ and $C_1$ in Lemma \ref{lem: energyineq for diffeq H1} (see \eqref{eq: C0Hs}), we have infer

\begin{align*}
C_0^\frac{4\beta -5 + 2\delta}{2(\beta+\delta)}  \leq 24(1+ \bar D)\bar C^2 \leq 48 \bar D \bar C^2 \text{ and } C_1 \leq 6(3(1+\bar D)^2\bar C^2+2 \bar C^6) \leq 72 \bar D^2 C^6
\end{align*}
Hence we can estimate the $\epsilon$ in terms of $M$ and $\delta$ by 
\begin{equation}\label{eq: epsilonexplicit 54 ugly}
\epsilon := \min \left \{ \left(\frac{\delta}{4}\right)^{\frac{1}{\delta}}, \left( 48 \bar D \bar C^2 e^{M^\frac{5}{2} \bar C^3/e} \max\{2 C_2 M^2(1+M^4 e^{C_A M^2}), 1 \} e^{144 \bar D^2 \bar C^6 M^2(1+M^4 e^{C_A M^2})} \right)^{-\frac{1}{\delta}} \right \}
\end{equation}
where $C_2$ is the universal constant from Lemma \ref{lem: fraclapRn}.
\end{proof}

\textbf{ Acknowledgements}. The authors were partially supported by
the Swiss national science foundation grant 182565 "Regularity issues for the Navier-Stokes equations and for other variational problems".

\end{document}